\newtheorem{theorem}{Theorem}[section]
\newtheorem{lemma}[theorem]{Lemma}
\newtheorem{proposition}[theorem]{Proposition}
\theoremstyle{definition}
\newtheorem{definition}[theorem]{Definition}
\newtheorem{remark}{Remark}
\def\N{\mathbb{N}}
\def\R{\mathbb{R}}
\let\e=\varepsilon
\let\vp=\varphi
\let\vt=\vartheta
\let\t=\tilde
\let\ol=\overline
\let\mc=\mathcal
\def\solose{\ \Rightarrow\ }
\def\O{\Omega}
\def\oo{\ol\O}
\def\po{\partial \O}
\def\la{\lambda}
\def\vep{\varepsilon}
\def\de{\delta}
\def\vfi{\varphi}
\def\dv{\text{\rm div}}
\def\Tr{\text{\rm Tr}}
\newcommand{\su}[2]{\genfrac{}{}{0pt}{}{#1}{#2}}
\def\eq#1{{\rm(\ref{eq:#1})}}
\def\thm#1{Theorem \ref{thm:#1}}
\def\square{\hbox{$\sqcap\kern-7pt\sqcup$}}
\def\seq#1{(#1_n)_{n\in\N}}
\def\pe{principal eigenvalue}
\def\l{\lambda_1}
\newenvironment{formula}[1]{\begin{equation}\label{eq:#1}}
                       {\end{equation}\noindent}
\def\Fi#1{\begin{formula}{#1}}
\def\Ff{\end{formula}\noindent}
\def\pe{principal eigenvalue}
\def\MP{\hbox{\bf MP}}
\def\m{\mu_1(F,\O)}
\let\@fnsymbol\@alph
\title{
\bf{Maximum Principle and generalized principal eigenvalue for 
degenerate elliptic operators}}
\author{Henri Berestycki \thanks{Ecole des Hautes Etudes en Sciences Sociales,
CAMS, 190-198, av. de France, 75244 Paris, France}
\and Italo Capuzzo Dolcetta \thanks{Dipartimento di Matematica, Sapienza
Universit\`a di Roma, piazzale A. Moro 3, 00185 Roma, Italy}
\and Alessio Porretta \thanks{Dipartimento di Matematica, Universit\`a di Roma Tor Vergata, Via della Ricerca Scientifica 1, 00133 Roma, Italy.}
\and Luca Rossi \thanks{ Dipartimento di
Matematica, Universit\`a di Padova, via Trieste 63, 35121 Padova,
Italy}}
\begin{document}

\maketitle

\begin{abstract}

We characterize the validity of the Maximum Principle in bounded domains for fully nonlinear degenerate elliptic operators  in terms of the sign of a suitably defined generalized \pe. Here, maximum 
principle refers to the non-positivity of viscosity subsolutions  of the Dirichlet problem. This characterization is derived in terms of a new notion of generalized \pe, which is needed because of the possible degeneracy of the operator, admitted in full generality. We further discuss the relations between this notion and other natural generalizations of the classical notion of \pe, some of which had already 
been used in the literature for particular classes of 
operators.
%
\end{abstract}


\section{Introduction}\label{sec:intro}

This paper is concerned with the Maximum Principle property for degenerate
second order elliptic
operators. Our aim is to characterize the validity of the Maximum Principle for
arbitrary degeneracy of the operator - including the limiting cases of first
and zero-order operators - in terms of the sign of a suitably defined 
generalized \pe. 
Such a complete characterization   is missing, as far as we know, even for the
case of linear operators, which was of course our first motivation. Due to the possible loss of regularity, as well as of boundary conditions, which is caused by degeneracy of ellipticity, the appropriate framework to deal with  this problem is, even in 
the linear case, that of viscosity solutions.
This approach is of course not restricted to the linear case, so we study the question in the 
more general setting of homogeneous fully nonlinear degenerate elliptic operators $F(x,u,Du,D^2u)$.
\\

%
Let $\O$ be a bounded domain in $\R^N$ and $\mathcal{S}_N$ be the
space of $n\times n$ symmetric matrices endowed with the usual partial order,
with $I$ being the identity matrix.
A fully nonlinear operator $F: \O\times\R\times \R^N\times \mathcal{S}_N\to
\R$ is said to be 
%
{\em degenerate elliptic} if 
 $F$ is non 
increasing in the matrix entry, see 
condition (H1) in the next section. The basic example to have in mind is that
of linear operators in non divergence form
$$F(x,u,Du,D^2u)=-\Tr(A(x)D^2u)-b(x)\.Du-c(x)u,\quad x\in\O,$$
where $A(x)$ is nonnegative definite.
\\

We are interested
in the following version of the Maximum Principle, \MP $\,$  in short :
\begin{definition}\label{def:vMP}
The operator $F$ satisfies \MP\ in $\O$ if every viscosity subsolution
$u\in USC(\ol\O)$ of the Dirichlet problem
\Fi{DP}
\left\{\begin{array}{ll}
F(x,u,Du,D^2u)=0 & \text{in }\O\\
u=0 & \text{on }\partial\O,
\end{array}\right.
\Ff
satisfies $u\leq0$ in $\ol\O$.
\end{definition}
We denote by $USC(\ol\O)$ the set of upper semicontinuous functions on $\ol\O$.
It is worth pointing out that in the above definition both the PDE and the
boundary conditions are understood in the viscosity sense (see
Section 7 of \cite{user}). Precisely, $u$ is a subsolution of \eq{DP} if for all
$\vp\in C^2(\ol\O)$ and $\xi\in\ol\O$ such that 
$(u-\vp)(\xi)=\max_{\ol\O}(u-\vp)$, it holds that
$$F(\xi,u(\xi),D\vp(\xi),D^2\vp(\xi)) \leq0\quad\text{if }\xi\in\O,$$
$$\max\big[u(\xi)\,,\, F(\xi,u(\xi),D\vp(\xi),D^2\vp(\xi))\big] \leq
0\quad\text{if }\xi\in\partial\O.$$
\\ Note, in particular, that the validity of the \MP\ property
implies that viscosity subsolutions cannot be positive on $\partial\O$, namely,
the inequality $u\leq0$ on $\partial\O$ holds in the classical pointwise sense.
\\

Before describing our results, let us recall some
classical and more recent results concerning the Maximum Principle and the \pe.

A standard result in the viscosity theory is that, under suitable
continuity assumptions on the degenerate elliptic operator $F$, the Maximum
Principle for viscosity 
subsolutions holds true if $r\mapsto F(x,r,p,X)$ is strictly increasing (see
e.g.~\cite{user}). This is only a sufficient condition.
It is well known that if $\O$ is a bounded smooth domain and $F$ is a uniformly
elliptic linear operator with smooth coefficients, then the validity of the
Maximum Principle for classical subsolutions is equivalent to the positivity of
the \pe\ $\l(F,\O)$ associated with Dirichlet boundary condition. This
eigenvalue is the bottom of the spectrum
of the operator $F$ acting on functions satisfying the Dirichlet boundary
conditions. It follows from the Krein-Rutman
theory that $\l(F,\O)$ is simple and the associated eigenfunction $\vp$ is 
positive
in $\O$.  So, if $\l(F,\O)\leq0$ then $\vp$ violates the Maximum Principle.
As a consequence, if the Maximum Principle holds then the problem admits a
positive strict supersolution. \\
The reverse implication is also true, but its
proof is
not completely straightforward since it requires an analysis of how sub and 
supersolutions can vanish at the boundary. To this aim, one typically makes 
use
of barriers and Hopf's lemma, for which uniform ellipticity or some other 
properties are required.\\ As
we will see, the possibility of different behaviours of supersolutions at the
boundary is one of the most delicate points one has to handle in order to
deal with degenerate operators.
\\

The connection between the Maximum Principle and the existence
of positive strict supersolutions led the first author, L.~Nirenberg and
S.~R.~S.~Varadhan to introduce in \cite{BNV} the following notion of {\em
generalized \pe}:
$$
\l(F,\O):=\sup\{\lambda\in\R\ :\ \exists \phi\in W^{2,N}_{loc}(\O),\
\phi>0\text{ in }\O,\ F[\phi]-\lambda\phi\geq0\text{ a.e.~in }\O\}.
$$
Here and henceforth we will write $F[\.](x)$ for short, or simply $F[\.]$, in 
place of
$F(x,\.,D\.,D^2\.)$.\\
Using this generalization, they were able to extend the characterization of the
Maximum Principle  for linear elliptic operators to the case of non-smooth domains,
where the classical \pe\ is not defined.

In \cite{BD07}, I.~Birindelli and F.~Demengel adapted the definition of
\cite{BNV} to  a class of fully
nonlinear operators $F$ which are homogeneous of degree $\alpha>0$,  including some degenerate elliptic operators which are modeled on the example of the $p-$Laplacian.  They defined the principal eigenvalue as 
$$
\l(F,\O):=\sup\{\lambda\in\R\ :\ \exists \phi\in LSC(\O),\ \phi>0\text{ and } 
F[\phi]-\lambda\phi^\alpha\geq0\text{ in }\O\}.
$$
Here, $LSC(\O)$ denotes the set of lower semicontinuous functions on $\O$.

Actually, in their earlier
work \cite{BD06}, the same authors had  defined the generalized \pe\ in the following slightly
different
way:
$$
\ol\lambda_1(F,\O):=\sup\{\lambda\in\R\ :\ \phi\in LSC(\O),\ \inf_\O\phi>0,\
F[\phi]-\lambda\phi^\alpha\geq0\text{ in }\O\}.
$$
The two notions coincide in the case treated in \cite{BD07}, but, as we will 
see in the proof of Proposition \ref{pro:ce} part (i) below,
this may not be the case in general. Let us mention that the non-equivalence
between $\l$ and $\ol\lambda_1$, which in the cases considered in the present
paper is due to the degeneracy of the operator, can occur when $\O$ is unbounded
even for uniformly elliptic linear operators. The characterization of the
Maximum Principle in terms of generalized \pe s such as $\l$ and $\ol\lambda_1$,
as well as the study of their properties, for uniformly elliptic linear
operators in unbounded domains is the object of the recent paper \cite{BR4}.
\\

It turns out that the generalized \pe\ $\l$ is not well suited
to characterize the validity of  \MP \, for the general degenerate cases that
we address in the present paper. This is showed in the next section, 
in which we also discuss the pertinence of other natural candidates, such as
$\ol\lambda_1$,  as well as the limit of the \pe s of the $\e$-viscosity
regularized 
operators $F^\e = -\e \Delta+F$.   None of those choices will be sufficient in order to characterize the \MP\  property  in the most general situation. Indeed, one of our main goals is to identify the right set of
admissible functions, in the definition of a generalized \pe, which can be suitable for 
general degenerate elliptic operators. Eventually, the right notion for our purposes turns out to be given by the following.

\begin{definition}
Given a domain $\O$ in $\R^N$ and an open set $\mc{O}$ such that
$\ol\O\subset\mc{O}$, and a fully nonlinear degenerate elliptic operator $F$ in
$\mc{O}$, we define
$$
\mu_1(F,\O):=\sup\{\lambda\in\R\ :\ \exists\O'\supset\ol\O,\ \phi\in
LSC(\O'),\ \phi>0\text{ and } 
F[\phi]-\lambda\phi^\alpha\geq0\text{ in }\O'\}.
$$
\end{definition}

 Hence, the definition of the generalized \pe\ $\mu_1$ in a domain
$\O$ requires the operator to be defined in a larger set.
Equivalent formulations for $\m$ are
$$\mu_1(F,\O)=\sup_{\O'\supset\ol\O}\l(F,\O')=\lim_{\e\to0^+}\l(F,\O+B_\e),$$
where the last one follows from the monotonicity of $\l(F,\O)$ with respect to
inclusion of the domains.
%
%

\subsection{Hypotheses and main result}

Throughout the paper, $\O$ is a bounded domain in $\R^N$, not necessarily smooth, and $\mc{O}$ is an open
set such that
$\ol\O\subset\mc{O}\subset \R^N$.
We assume that  $F:\mc{O}\times\R\times\R^N\times\mc{S}_N\to\R$ is a continuous function which satisfies the following hypotheses:
\begin{itemize}
 \item[(H1)]\label{elldeg}
$F(x,r,p,X+Y)-F(x,r,p,X)\leq0,\quad\forall
(x,r,p,X,Y)\in\mc{O}\times\R\times\R^N\times\mc{S}_N\times\mc{S}_N,\ Y\geq0$;
 \item[(H2)]\label{hom}
$\exists\alpha>0,\ F(x,\tau r,\tau p,\tau X)=\tau^\alpha F(x,r,p,X),
\quad\forall (x,r,p,X)\in\mc{O}\times\R\times\R^N\times\mc{S}_N,\ \tau\geq0$;
 \item[(H3)]\label{cont}
$r\mapsto F(x,r,p,X)$ is continuous, uniformly with respect to
$(x,p,X)\in\mc{O}\times\R^N\times\mc{S}_N$;
 \item[(H4)]\label{somma}
For all $R>0$, there exists a function $\omega\in C([0,+\infty))$ with
$\omega(0)=0$ such that
if $X,Y\in\mc{S}_N$ satisfy
$$\exists\sigma>0,\quad
-3\sigma\left(\begin{matrix}
               I & 0 \\
0 & I
              \end{matrix}
\right)
\leq\left(\begin{matrix}
               X & 0 \\
0 & -Y
              \end{matrix}
\right)\leq3\sigma\left(\begin{matrix}
               I & -I \\
-I & I
              \end{matrix}
\right),$$
then
$$F(y,r,\sigma(x-y),Y)-F(x,r,\sigma(x-y),X)\leq\omega(\sigma|x-y|^2+|x-y|),
\quad\forall x,y\in\mc{O},\ |r|\leq R.$$
\end{itemize}

As 
it was established in \cite{user}, hypothesis (H4) is the key structure
condition for the validity of the Comparison Theorem for viscosity solutions of
degenerate elliptic equations. Let us emphasize that no regularity
assumption is required on the set $\O$.
\\

We now state the main result of this article.
\begin{theorem}\label{thm:main}
Under the assumptions (H1)-(H4), $F$ satisfies the \MP\ property in
$\O\subset\subset\mc{O}$ if and only if $\mu_1(F,\O)>0$.
\end{theorem}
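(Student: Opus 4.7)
The theorem is an equivalence, so I treat the two implications separately. The easy direction, $\mu_1>0\Rightarrow\MP$, is a sliding argument powered by the super-eigenfunction supplied by $\mu_1>0$. Extract $\lambda>0$, an open $\Omega'\supsetneq\ol\Omega$, and $\phi\in LSC(\Omega')$ with $\phi>0$ and $F[\phi]\geq\lambda\phi^\alpha$ in $\Omega'$. Suppose for contradiction that $u\in USC(\ol\Omega)$ is a viscosity subsolution of \eq{DP} with $M:=\max_{\ol\Omega}u>0$. Since $\phi$ is LSC and positive on the compact set $\ol\Omega$, $m:=\min_{\ol\Omega}\phi>0$, so $t^*:=\inf\{t>0:t\phi\geq u\text{ on }\ol\Omega\}\in(0,M/m]$. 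Semi-continuity provides a contact point $x_0\in\ol\Omega$ with $u(x_0)=t^*\phi(x_0)\geq t^*m>0$. By homogeneity (H2), $v:=t^*\phi$ still satisfies $F[v]\geq\lambda v^\alpha$ in $\Omega'$, hence $F[v]\geq c>0$ near $x_0$. To exploit this strict inequality at the contact, I plan to double variables: on $\ol\Omega\times\ol{B_\rho(x_0)}$ with $\rho>0$ small enough that $\ol{B_\rho(x_0)}\subset\Omega'$, form $\Phi_n(x,y):=u(x)-v(y)-\tfrac{n}{2}|x-y|^2$, whose maximizers $(x_n,y_n)$ converge to $(x_0,x_0)$ with $n|x_n-y_n|^2\to 0$. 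Ishii's lemma then supplies matrices $X_n,Y_n$ fitting the hypothesis of (H4), and the viscosity conditions yield
\[
F(x_n,u(x_n),n(x_n-y_n),X_n)\leq 0,\qquad F(y_n,v(y_n),n(x_n-y_n),Y_n)\geq\lambda v(y_n)^\alpha;
\]
in the boundary case $x_n\in\partial\Omega$ the first inequality still holds because $u(x_n)\to u(x_0)>0$ rules out the alternative $u(x_n)\leq 0$ in the boundary part of the subsolution definition. Subtracting, applying (H3)--(H4), and letting $n\to\infty$ produces $0\geq\lambda v(x_0)^\alpha>0$, a contradiction.

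For the converse, $\MP\Rightarrow\mu_1(F,\Omega)>0$, I must produce $\Omega'\supsetneq\ol\Omega$, $\lambda>0$, and positive $\phi\in LSC(\Omega')$ with $F[\phi]\geq\lambda\phi^\alpha$. My plan is first to construct a positive strict supersolution $\psi\in C(\ol{\Omega'})$ on some $\Omega'=\Omega+B_\delta\subset\mc{O}$ with $\psi>0$ on $\ol{\Omega'}$ and $F[\psi]\geq\varepsilon>0$ on $\Omega'$ in the viscosity sense, for suitable $\delta,\varepsilon>0$. Given such a $\psi$, bounding $\psi\leq M$ on $\ol{\Omega'}$ yields $F[\psi]\geq\varepsilon\geq\varepsilon M^{-\alpha}\psi^\alpha$, whence $\mu_1(F,\Omega)\geq\varepsilon M^{-\alpha}>0$. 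To produce $\psi$ from MP alone, I expect to use either a Perron-type iteration within the viscosity framework, or a regularization by the uniformly elliptic operators $F^\varepsilon=-\varepsilon\Delta+F$, constructing the classical principal eigenfunction of $F^\varepsilon$ on a slight enlargement of $\Omega$ and passing to the limit using a stability argument for MP as $\varepsilon\to 0^+$.

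The main obstacle is precisely this construction of $\psi$. Without uniform ellipticity, Hopf's lemma and boundary-barrier arguments are unavailable, and the supersolution must be positive throughout a full neighborhood of $\ol\Omega$ (not merely inside $\Omega$) and remain a strict supersolution across $\partial\Omega$. As emphasized in the introduction, the boundary behavior of sub- and supersolutions is the central technical subtlety of degenerate operators, so preserving strict positivity of $\psi$ together with the strict supersolution inequality across $\partial\Omega$ is the crucial step of the whole theorem.
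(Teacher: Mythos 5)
Your forward implication, $\mu_1(F,\Omega)>0\Rightarrow\MP$, follows the same idea as the paper's Proposition \ref{pro:CS}: rescale the supersolution $\phi$ to achieve contact with a hypothetical positive subsolution and run a doubling-of-variables argument. Two small remarks. First, the paper extends $u$ by zero to all of $\Omega'$ (producing $\tilde u=\max(u,0)$ on $\ol\Omega$, $0$ elsewhere) and shows $\tilde u$ is a subsolution of $F=0$ in $\Omega'$; it then doubles variables over $\ol{\Omega'}\times\ol{\Omega'}$. This is cleaner than your doubling over $\ol\Omega\times\ol{B_\rho(x_0)}$: the contact set of $\tilde u$ and $\phi$ lies in $\ol\Omega$, safely interior to $\Omega'$, so the maximizer of the doubled functional never sits on an artificial boundary, whereas with your set-up nothing forces the accumulation point of $(x_n,y_n)$ to be $x_0$ rather than some other contact point, possibly on $\partial B_\rho(x_0)$; you would need to add a localizing penalization. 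Second, extending $u$ by zero lets the paper handle the boundary case without invoking the relaxed boundary condition directly inside the doubling step: if the touching occurs at a point where $\tilde u=0$, the test function has a local minimum there and (H1)--(H2) give $F\le 0$ for free.

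For the converse, $\MP\Rightarrow\mu_1(F,\Omega)>0$, your proposal has a genuine gap which you yourself acknowledge: you reduce the claim to constructing a strict positive supersolution $\psi\in C(\ol{\Omega'})$ of $F[\psi]\ge\varepsilon$ across $\partial\Omega$, but neither of the two strategies you mention is carried out, and neither is the one the paper actually uses. The regularization route $F^\varepsilon=-\varepsilon\Delta+F$ is in fact discouraged by Proposition \ref{pro:ce}(ii), which exhibits a degenerate operator for which $\lambda_*=\liminf_{\varepsilon\to 0}\lambda^\varepsilon>0$ yet \MP\ fails; this signals that the regularized eigenvalues and eigenfunctions need not carry the right information in the limit. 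The paper instead proves the \emph{contrapositive} through Proposition \ref{pro:U}: it constructs a nonnegative $U\in USC(\ol\Omega)$, $U\not\equiv 0$, with $F[U]-\mu_1(F,\Omega)U^\alpha\le 0$ in $\Omega$ and $U\le 0$ on $\partial\Omega$ in the viscosity sense. If $\mu_1\le 0$, then $F[U]\le\mu_1 U^\alpha\le 0$, so $U$ is a subsolution of the Dirichlet problem that is positive somewhere, contradicting \MP. The construction of $U$ is where all the degeneracy difficulties are absorbed: Perron's method is run for $F[u]-(\mu_1-1/n)u^\alpha=1$ with the constraint that the competitors vanish outside a decreasing family of smooth outer neighborhoods $\Omega_n\searrow\ol\Omega$; the sup of the Perron solutions must blow up (otherwise one could build a bounded positive strict supersolution in $\Omega_n\supset\ol\Omega$, contradicting the definition of $\mu_1$); normalizing and passing to the upper relaxed limit yields $U$. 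Note that this strategy never requires a positive supersolution in a neighborhood of $\ol\Omega$; it sidesteps precisely the boundary-positivity obstruction you identify as the ``main obstacle,'' which is why it succeeds where a direct construction of $\psi$ from \MP\ is unavailable.
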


Some remarks on the statement of Theorem \ref{thm:main} are in order. Since our
characterization of \MP\ in $\O$ requires the operator $F$
to be defined in some $\mc{O}\supset\ol\O$ then, if $F$ is just defined in $\O$
and satisfies (H1)-(H4) there, in order to apply our result we need to
extend it to an operator satisfying (H1)-(H4) in the larger domain $\mc{O}$. 
This is not completely satisfactory, even though the result itself ensures that
the notion $\mu_1(F,\O)$ does not depend on the
particular extension. A characterization
expressed in terms of a more intrinsic notion, such as $\l$ or
$\ol\lambda_1$, would be preferable.
Proposition \ref{pro:ce} below provides examples showing that \MP\ is not
guaranteed by $\l>0$ nor by $\ol\lambda_1>0$. However, in the
case of
$\ol\lambda_1$, the only examples we are able to construct do not satisfy
(H4).

We leave it as an open problem to know whether $\mu_1$ coincides with
$\ol\lambda_1$ under the assumption (H4), and then whether $\mu_1(F,\O)$ can be
replaced by 
$\ol\lambda_1(F,\O)$ in \thm{main}.
In Theorems \ref{thm:l''=mu} and \ref{thm:l''=mulinear} below we show that, for a  smooth domain $\Omega$, 
this is true in two significant cases: if the operator  admits barriers at each point of the
boundary, and, for the case of linear operators, if
in each connected component of $\partial\O$ the so--called Fichera condition is
either always satisfied or always violated. The general case remains open.

Finally, let us point out that a generalized principal eigenfunction associated
with $\mu_1$ does not always exist (see Remark \ref{rem:pef} in Section
\ref{sec:proof}). This is due to the degeneracy of the operator and is true
even in the linear case.


\subsection{Examples}

In this section we present some examples of operators to which \thm{main} 
applies, recovering some known results. We further analyse the generalized \pe s
in these
particular cases. The examples are divided into classes, but none of them is
intended to be exhaustive.
\\

\noindent{\em The standard sufficient condition}

If an operator $F$ satisfies $\min_{x\in\ol\O}F(x,r,0,0)>0$ for all $r>0$, then
  \MP\ holds. This is an immediate consequence of the
definition of viscosity subsolution. Notice that in this case
$\ol\lambda_1(F,\O)>0$ and, up to extending $F$ outside $\O$ as a continuous
function, $\mu_1(F,\O)>0$ too.\\

\noindent{\em First-order operators}

\thm{main} applies to the generalized eikonal operator $F[u]=-b(x)|Du|-c(x)u$,
provided that $b\in W^{1,\infty}(\mc{O})$ and $c\in
C(\mc{O})$. The Lipschitz-continuity of $b$ is required for (H4)
to hold. Furthermore, the result still holds for an operator
$G[u]=F[u]-\Tr(A(x)D^2u)$, with $A=\Sigma^t\Sigma$ and
$\Sigma\in W^{1,\infty}(\mc{O})$. 

Another family of operators which can be considered is
$F[u]=-b(x)\.Du|Du|^{\alpha-1}-c(x)|u|^{\alpha-1}u$, with $\alpha\geq1$. The
hypothesis on $b$ is again $b\in W^{1,\infty}(\mc{O})$ if $\alpha=1$, otherwise
we need $(b(x)-b(y))\.(x-y)\leq0$ for $x,y\in\mc{O}$.
\\

\noindent{\em Subelliptic operators}

For several classes of subelliptic operators one can derive the sign of $\mu_1$
and thus apply \thm{main}. 
For example, if the ellipticity of $F$ is not degenerate in a direction $\xi$,
in the sense
that there exists $\beta>0$ such that
$$F(x,r,p,X+\xi\otimes\xi)-F(x,r,p,X)\leq-\beta,\quad\forall
x\in\mc{O},\ r\in\R,\ p\in\R^N,\ X\in\mc{S}_N,$$
and the positive constants are supersolutions of $F=0$ in $\mc{O}$, i.e.,
$F(x,1,0,0)\geq0$ in $\mc{O}$, then $\m>0$.
This is seen by taking $\phi(x)=1-\e e^{\sigma\xi\.x}$, with $\sigma$ large and
then $\e$ small.
\\
The above conditions are satisfied for instance by  the Grushin operator: 
$-\partial_{xx}-|x|^\alpha\partial_{yy}$, with $\alpha>0$. This operator, which 
is a H\"ormander operator if $\alpha$ is an even integer, belongs to the class 
of $\Delta_\lambda$ operators studied in \cite{FL83}.
The key property of such operators is the $2$-homogeneity with respect to a
group of dilations of $\R^N$. Concerning the generalized \pe s
introduced before, this 
property yields $\mu_1(\Delta_\lambda,\O)=\l(\Delta_\lambda,\O)$ if 
$\O$ is starshaped with respect to the origin. Actually, the following much 
weaker scaling property is required on an operator $F$ in order to have 
$\mu_1(F,\O)=\l(F,\O)$: there is a family of $C^2$-diffeomorphisms 
$(\vt_t)_{t>0}$ of $\R^N$ into itself and a function $\psi:\R_+\to\R$
satisfying 
$$
\forall t>1,\quad \ol\O\subset\vt_t(\O),\qquad
\lim_{t\to1}\psi(t)=1,$$
and
$$\forall u\in C^2(\vt_t(\O)),\quad
F[u\circ\vt_t](x)=\psi(t)F[u](\vt_t(x)),\quad x\in\O.$$
Indeed for $\phi\in LSC(\O)$, the above condition implies (in the viscosity 
sense)
$$F[\phi\circ\vt_t^{-1}](x)=(\psi(t))^{-1}F[\phi](\vt_t^{-1}(x)),
\quad x\in\vt_t(\O).$$
It follows from the definition of $\l$ that the mapping 
$t\mapsto\l(F,\vt_t(\O))$ is lower semicontinuous at $t=1$. Hence, 
since for $t>1$, $\l(F,\vt_t(\O))\leq\mu_1(F,\O)\leq\l(F,\O)$, we infer that 
$\mu_1(F,\O)=\l(F,\O)$.
\\

\noindent{\em Parabolic operators} 

It is well known that the classical Maximum
Principle holds for uniformly parabolic linear operators  of the type
$\partial_tu-\Tr(A(t,x)D^2u)-b(t,x)\.Du-c(t,x)u$, with $t>0$,
$x\in\O$. Uniformly parabolic means that
$A\geq\alpha I$, for some positive constant $\alpha$. 
Note that a crucial difference with the elliptic case is that the
Maximum Principle holds
even if the zero order term $c$ is positive and very large. 
One can interpret the validity of the
Maximum Principle as a consequence of the positivity of the \pe s. Indeed,
considering the function $\phi=e^{\sigma t}$ and letting $\sigma\to+\infty$, one
finds that in this case all notions of \pe s introduced in Section 
\ref{sec:intro} are equal to $+\infty$.  However, the  
parabolic Maximum Principle cannot be derived right away from \thm{main} due to 
the unboundedness of the domain.
%
%
%
%
%
%
%
%
\\

\noindent{\em $1$-homogeneous, uniformly elliptic operators}

A fully nonlinear operator $F: \O\times\R\times \R^N\times \mathcal{S}_N\to
\R$ is said to be  {\em uniformly elliptic} if there exists $\alpha>0$ such that
$$
F(x,r,p,X+Y)-F(x,r,p,X)\leq-\alpha {\rm Tr}(Y),\quad\forall
(x,r,p,X,Y)\in \Omega\times\R\times\R^N\times\mc{S}_N\times\mc{S}_N.
$$
An important role in the theory of fully nonlinear, uniformly elliptic operators
is played by $1$-homogeneous
operators, that is, operators satisfying (H2) with $\alpha=1$. Besides linear
operators, this class includes Pucci, Bellman and Isaacs operators.
The latter class, which is the most general, fulfils (H1)-(H4) under 
suitable regularity conditions on the coefficients. Being uniformly elliptic, 
it also satisfies the hypotheses of \thm{l''=mu} below, which implies that the 
\MP\ property is characterized by the sign of $\ol\lambda_1$ if $\O$ is smooth.

Several works have already addressed the question of the validity of the
Maximum Principle and the existence of the principal
eigenfunction. Among them, let us cite the papers 
\cite{FQ} and   \cite{Q} for the Pucci operator 
and   \cite{QS} for more general operators, including
the Bellman one.
In \cite{QS}, the simplicity of the \pe\ is further obtained.
The method used in the above mentioned papers differs from the one of
\cite{BD06} and ours. 
It follows the line of the classical proof based on the Krein-Rutman theory.
This is possible because of the uniform ellipticity of the operator, which makes
the $W^{2,N}$ estimates available and avoids the direct use of
the definition of viscosity solution.
\\

\noindent{\em $1$-homogeneous, degenerate elliptic operators}

Two examples of fully nonlinear degenerate operators are
$$
-\mathcal{P}_k (D^2u) := -\eta_{N-k+1}(D^2u) -\ldots -\eta_N(D^2u), 
$$
$k$ being an integer between $1$ and $N$ and $\eta_1(D^2u) \leq \eta_2 (D^2u) 
\leq \ldots \leq \eta_N(D^2u)$ being the ordered 
eigenvalues of the matrix $D^2u$, and the degenerate maximal Pucci 
operator 
$$
-\mathcal{M}^+_{0,1} (D^2u) := -\sum_{i=1}^N\max(\eta_i(D^2u),0) = 
-\sup_{A\in \mathcal{S}_N,\ 0\leq A\leq I} \Tr(AD^2u).
$$
The operator $-\mathcal{P}_N$ has been used by F.~R. Harvey and H.~B. Lawson
to characterize the validity of the Maximum Principle for operators
only depending on the Hessian: Theorem 2.1 of \cite{HL-SMP} states, with a
geometrical terminology, that an operator
$F:\mc{S}_N\to\R$ satisfies the \MP\ if and only if $F(X)\leq0\solose
-\mathcal{P}_N(X)\leq0$. Notice that if $F$ satisfies such a property then
the function $\phi(x):=k-|x|^2$, with $k>\sup\{|x|^2\ :\ x\in\O\}$, 
satisfies $\phi>0$ in $\ol\O$ and $F(D^2\phi)>0$, whence $\m>0$.

Both $\mathcal{P}_k$ and $-\mathcal{M}^+_{0,1}$ have positive principal
eigenvalue $\mu_1$ and satisfy the \MP. In addition, they admit continuous 
barriers at every point of the boundary 
of a smooth domain, in the sense of Definition \ref{def:barrier} below. 
Therefore \thm{l''=mu} implies that $\mu_1$ coincides with $\ol\lambda_1$.
\\

\noindent{\em The $p$ and the infinity Laplacian}

The $p$ and the infinity Laplacian are defined respectively by
$$\Delta_p u:=\dv(|Du|^{p-2} Du),\quad p>1,\qquad
\Delta_\infty u:=\frac{Du}{|Du|} D^2u\frac{Du}{|Du|}.$$
These definitions have a meaning, in the viscosity sense, if the gradient
is nonzero. One has to extend them in suitable way to get a general definition.
Both the operators $F=-\Delta_p,-\Delta_\infty$, with the possible addition of a
degenerate elliptic
operator sharing the same homogeneity property, fit with
the hypotheses (H1)-(H4) of \thm{main} above.

The characterization of the Maximum Principle for the
$p$-Laplacian was derived by I.~Birindelli and F.~Demengel in
\cite{BD06} and \cite{BD07}, using the \pe\ $\ol\lambda_1$ and $\l$
respectively.
The fact that $\l=\ol\lambda_1=\mu_1$ in that case is due to the validity of
the Hopf
lemma and the existence of barriers. 
The result for the infinity Laplacian is due to P.~Juutinen \cite{infinity} and
is expressed in terms of $\ol\lambda_1$.
The existence of barrier is crucial also in this case.
We remark that, owing to the particular structure of the infinity-Laplacian, 
barriers exist without assuming any regularity of $\partial\O$.
Finally, the existence of a generalized principal eigenfunction is proved in
\cite{BD07} and \cite{infinity}, but not its simplicity.

%
%
%


\section{Exploring other notions of generalized \pe}

In this section we show that the validity of the
\MP\ is not characterized by the positivity of $\l$, nor by that of other
natural notions of generalized \pe.
One is the quantity $\ol\lambda_1(F,\O)$ defined before. 
Another natural candidate is 
$$
\lambda_*(F,\O):=\liminf_{\e\to0^+}\lambda^\e,
$$
where $\lambda^\e$ denotes the classical Dirichlet \pe\ of the regularized
operator $-\e\Delta+F$ in $\O$.
If $\O$ is smooth and $F$ is a uniformly elliptic linear operator with smooth
coefficients, then the notions $\l$, $\ol\lambda_1$, $\mu_1$, $\lambda_*$
coincide. In
the general case we only have that $\mu_1\leq\ol\lambda_1\leq\l$.

We now show that, even in the linear case, the
sign of $\l$, $\lambda_*$, $\ol\lambda_1$ do not characterize the validity of
the \MP\
for degenerate elliptic operators.

\begin{proposition}\label{pro:ce}
For each of the following conditions:
\begin{itemize}
 \item[{\em (i)}] $\l(F,\O)>0$,
 \item[{\em (ii)}] $\lambda_*(F,\O)>0$,
 \item[{\em (iii)}] $\ol\lambda_1(F,\O)>0$,
\end{itemize}
there exists a degenerate elliptic linear operator $F$ with smooth coefficients
in $\O$ that does not satisfies the \MP\ property and yet satisfies that
condition. Moreover, for cases (i) and (ii), such operator satisfies (H4).
\end{proposition}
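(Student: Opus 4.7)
The strategy is to exhibit explicit counter-examples, one for each of the three cases, constructed as linear degenerate operators on a low-dimensional domain where all quantities can be computed by hand. For cases (i) and (ii) I would use the same one-dimensional operator with smooth coefficients; for (iii) a separate, more delicate construction is required.

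For (i), on $\O=(0,\pi)$ I take
$$F[u]=-\sin^2(x)\,u''-c\,u$$
with a small $c\in(0,\tfrac14)$. A direct computation with $\phi_k:=\sin^k x$ gives
$$\frac{F[\phi_k]}{\phi_k}=-k(k-1)+k^2\sin^2x-c,$$
whose infimum over $\O$ equals $-k(k-1)-c$. Taking $k=\tfrac12$ yields $\l(F,\O)\ge \tfrac14-c>0$ directly from the definition of $\l$. For the failure of \MP, take instead $k=c$: the expression reduces to $-c^2\cos^2x\le 0$, so $u:=\sin^c x$ is a smooth, hence viscosity, subsolution of $F[u]=0$ in $\O$. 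Since $c<1$, $u$ has infinite one-sided derivative at the endpoints, so no $C^2$ test function can touch $u$ from above at $\po$, and the viscosity boundary condition is vacuous. Thus $u>0$ in $\O$ with $u|_{\po}=0$, and \MP\ fails. Condition (H4) is immediate since $\sqrt{\sin^2 x}=|\sin x|$ is Lipschitz.

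For (ii), I would keep the same operator and analyse the classical Dirichlet eigenvalue $\lambda^\e$ of the uniformly elliptic regularization $-\e\Delta+F$. With the test function $\phi=\sin^{1/2}x$, a direct computation yields
$$\frac{(-\e\Delta+F)[\phi]}{\phi}=\frac{\e\cos^2x}{4\sin^2x}+\frac{\e}{2}+\frac{1}{4}+\frac{\sin^2x}{4}-c\ \ge\ \frac{1}{4}-c$$
uniformly in $\e>0$. Since $-\e\Delta+F$ is uniformly elliptic, its BNV generalized \pe\ coincides with $\lambda^\e$, so $\lambda^\e\ge \tfrac14-c$ for every $\e>0$ and hence $\lambda_*(F,\O)\ge \tfrac14-c>0$.

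For (iii) the previous template fails: at $x=0,\pi$ any $\phi$ with $\inf_\O\phi>0$ yields $F[\phi]/\phi=-c<0$, forcing $\ol\la_1(F,\O)\le -c$. A separate construction is needed. A natural candidate is a second-order operator of the form $F[u]=-a(x)u''-b(x)u'$ in which $a$ vanishes at $\po$ with $\sqrt a$ not Lipschitz there (for instance $a(x)=x$ on $(0,1)$) together with a drift $b$ chosen so that $F[\phi]>0$ pointwise also at the degenerate boundary points for some positive $\phi$ bounded away from $0$. The subsolution breaking \MP\ is then built at the square-root scale of $a$, invisible to the Ishii structural estimate; the non-Lipschitz $\sqrt a$ is precisely what forces (H4) to fail. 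The main obstacle is this case: the pointwise positivity of $F[\phi]$ demanded by $\ol\la_1>0$ must hold throughout $\O$ including at the degeneracy points, while the MP-violating subsolution must exploit a genuinely boundary-driven phenomenon, and reconciling the two is exactly what pushes the example out of the Ishii class (H4).
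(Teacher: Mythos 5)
Your constructions for cases (i) and (ii) are correct but different from the paper's. The paper uses simple first-order one-dimensional operators, $F[u]=\tfrac{x}{2}u'-u$ for (i) and $F[u]=-2xu'$ for (ii), both on $\O=(0,1)$. You instead use a single second-order operator $F[u]=-\sin^2(x)u''-cu$ on $(0,\pi)$ that handles both cases at once. Your computations with $\phi_k=\sin^k x$ are right: $\l\geq\tfrac14-c$ follows from $k=\tfrac12$, the smooth function $u=\sin^c x$ is a classical interior subsolution for $k=c$, and the infinite one-sided derivative of $u$ at $\po$ makes the viscosity boundary condition vacuous. The regularization estimate $\lambda^\e\geq\tfrac14-c$ via the same $\phi_{1/2}$, combined with the identification of $\lambda^\e$ with the BNV eigenvalue of the uniformly elliptic regularization, is also sound. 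Your approach buys a subsolution that is actually smooth in $\O$, whereas the paper's MP-violating subsolution for (ii) is the indicator function of the single boundary point $\{0\}$, which requires a check of the relaxed viscosity boundary condition; on the other hand, the paper's operators and computations are considerably shorter, and the paper's Gaussian conjugation trick for (ii) is cleaner than your direct estimate.

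For case (iii), however, your proposal is not a proof: it is a plan. You correctly identify that the $\sin^2 x$ template forces $\ol\lambda_1\leq -c$, that one must pick a diffusion $a$ with $\sqrt a$ failing to be Lipschitz so that (H4) can be violated, and you even name the right candidate $a(x)=x$ on $(0,1)$. But you do not produce the positive supersolution $\phi$ bounded away from $0$, you do not produce the MP-violating subsolution, and you never verify $\ol\lambda_1>0$. The paper's example is $F[u]=-xu''$ on $(0,1)$, no drift: the function $\phi(x)=1+\sqrt x$ satisfies $F[\phi]=\tfrac{1}{4\sqrt x}\geq\tfrac14\geq\tfrac18\phi$, hence $\ol\lambda_1\geq\tfrac18$, and the MP-violating subsolution is again the indicator function of $\{0\}$ (the paper also gives a first-order variant $F[u]=-\sqrt x\,u'$). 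Two of your guesses point the wrong way: the drift $b$ you propose to add is unnecessary (the paper uses $b=0$), and the MP-violating subsolution cannot be "built at the square-root scale of $a$" as a smooth function, because for $-xu''$ any classical subsolution is convex on $(0,1)$, and a convex function vanishing at both endpoints is nonpositive. This is precisely why a genuinely discontinuous subsolution, such as the indicator of a boundary singleton, is unavoidable here, and it is the step your sketch omits.
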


\begin{proof}
(i) Let $F[u]=\frac x2 u'-u$ and $\O=(0,1)$. The function $u(x)=x(1-x)$ violates
the \MP, but $\l(F,\O)=+\infty$ (as it is seen by taking $\phi(x)=x^n$ in the
definition, with $n\to+\infty$). We remark that in
this case $\ol\lambda_1(F,\O)=\m\leq0$ by Theorems
\ref{thm:main} and \ref{thm:l''=mulinear} below.

(ii) Let $F[u]=-2xu'$ and $\O=(0,1)$. For $\e>0$ and $\phi\in C^2(\O)$, we have
that
$$-\e\phi''+\left(\frac{x^2}\e+1\right)\phi=e^{\frac{x^2}{2\e}}(-\e D^2+F)
\left[\phi e^{-\frac{x^2}{2\e}}\right].$$
As a consequence, $\lambda^\e$ coincides with the Dirichlet \pe\ of the operator
$-\e u''+\left(\frac{x^2}\e+1\right)u$, which is greater than or equal to 1.
This shows that $\lambda_*(F,\O)\geq1$. On the other hand, the indicator
function of $\{0\}$ violates \MP, because, as one can readily check, any smooth
function $\vp$ touching it from above at some $x_0\in[0,1)$ satisfies
$F[\vp](x_0)=0$.

(iii) For this case, we give two examples, one with a first order and one with a
second order operator. The operator
$F[u]=-\sqrt{x}u'$ does not satisfies \MP\ in $\O=(0,1)$, as it is seen by
taking $u$ equal to the indicator function of $\{0\}$. But, taking
$\phi(x)=2-\sqrt{x}$ in the
definition of $\ol\lambda_1$ yields $\ol\lambda_1(F,\O)\geq1/4$. 

An example of the second order is provided by $F[u]=-xu''$ and $\O=(0,1)$. As
before, 
the indicator function of $\{0\}$ violates \MP. On the other hand, the
function $\phi(x)=1+\sqrt x$ satisfies
$$F[\phi]=\frac 1{4\sqrt x}\geq\frac14\geq\frac18\phi\quad\text{in }(0,1),$$
whence $\ol\lambda_1(F,\O)\geq1/8$. 
\end{proof}

\begin{remark} The two operators used as examples  for case (iii) do not
satisfy hypothesis (H4), hence \thm{main} does not apply to them. Nevertheless,
they do not violate the conclusion of the theorem because, as one can
check, $\m=0$ in
both cases, independently of the extension of $F$ outside $\O$. 
This seems to suggest that hypothesis (H4) in \thm{main} could be relaxed.
\end{remark}

Proposition \ref{pro:ce} involves linear operators,
for which the notion of viscosity solution could appear artificial.
However, one cannot characterize the validity of the
Maximum Principle for $C^2$ solutions in terms of the signs of $\l$,
$\ol\lambda_1$,
$\mu_1$ or $\lambda_*$. Indeed any $C^2$ (or even $C^0$) subsolution of the
equation $F[u]:=x^2 u=0$ in $\O=(-1,1)$ is necessarily nonpositive, but it is
not hard to check that
$\l(F,\O)=\ol\lambda_1(F,\O)=\mu_1(F,\O)=\lambda_*(F,\O)=0$.
Also, notice that the operators used in the proof of Proposition \ref{pro:ce}
would still yield the result under the additional requirement that 
$\phi\in C^2(\O)$ in the definitions of $\l$ and $\ol\lambda_1$.
\\

The case (ii) in Proposition \ref{pro:ce} shows that, for degenerate
elliptic operators, the notion of generalized \pe\ is unstable with respect to
perturbations of the operator. Thus, owing to \thm{main}, the same is in some
sense true for the \MP\ property.
We now present an example that exhibits the instability of the notions $\l$,
$\ol\lambda_1$, $\mu_1$ with respect to perturbations of
the operator and approximations of the domain from inside. Let $F$ be the
operator defined by $F[u]=-xu'$, $x\in\O=(0,1)$. It turns out that
$$\l(F,\O)=\ol\lambda_1(F,\O)=\mu_1(F,\O)=0,$$
$$\forall\e>0,\quad
\l(F-\e D,\O)=\ol\lambda_1(F-\e D,\O)=\mu_1(F-\e D,\O)=+\infty,$$
$$\forall\O'\subset\subset\O,\quad
\l(F,\O')=\ol\lambda_1(F,\O')=\mu_1(F,\O')=+\infty.$$

The instability of the \pe\ is one of the main differences with the uniformly
elliptic case. In particular, the stability with respect to interior
perturbations of the domain is crucial in the arguments of \cite{BNV}. Its
validity is based on the Harnack inequality, which is not available in the
general degenerate elliptic case.

It is straightforward to check that $\mu_1$ is stable with respect to perturbations
of the domain from
outside. If the same property holds for $\l$, $\ol\lambda_1$ then they
coincide with
$\mu_1$. Proposition \ref{pro:ce} shows that this is not always the case.



\section{Proof of \thm{main}}\label{sec:proof}

We start by proving that the condition $\mu_1(F,\O)>0$ is sufficient for the
\MP\ property to hold.

\begin{proposition}\label{pro:CS}
If (H1)-(H4) hold and $\mu_1(F,\O)>0$ then $F$ satisfies \MP\
in $\O$.
\end{proposition}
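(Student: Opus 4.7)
The plan is to argue by contradiction: assuming a subsolution $u\in USC(\ol\O)$ with $\max_{\ol\O}u>0$, I would use the positive strict supersolution $\phi$ furnished by $\mu_1(F,\O)>0$ as a barrier and derive a contradiction via a Crandall--Ishii doubling of variables. From the definition of $\mu_1$, first pick $\lambda>0$, an open set $\O'\supset\ol\O$ contained in $\mc{O}$, and $\phi\in LSC(\O')$ with $\phi>0$ and $F[\phi]\geq\lambda\phi^\alpha$ in $\O'$ in the viscosity sense; lower semicontinuity and positivity of $\phi$ on the compact $\ol\O$ give $m:=\min_{\ol\O}\phi>0$.

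The first step is to produce a contact point. Set
\[
t^*:=\inf\{t>0\ :\ u\leq t\phi\ \text{in}\ \ol\O\},
\]
which is finite and strictly positive thanks to boundedness of $u$ and $\phi\geq m$. Upper semicontinuity of $u-t\phi$ together with a continuity-in-$t$ check yields $\max_{\ol\O}(u-t^*\phi)=0$, attained at some $x_0\in\ol\O$; hence $u(x_0)=t^*\phi(x_0)>0$. By the homogeneity (H2), $w:=t^*\phi$ is still a strict supersolution: $F[w]\geq \lambda w^\alpha\geq \lambda(t^*m)^\alpha=:c>0$ in $\O'$.

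The core step is a doubling of variables: on the compact product $\ol\O\times K$, where $K\subset\O'$ is a compact neighborhood of $\ol\O$, maximize
\[
\Psi_n(x,y):=u(x)-w(y)-\tfrac{n}{2}|x-y|^2
\]
at some $(x_n,y_n)$. Standard viscosity lemmas give, along a subsequence, $(x_n,y_n)\to(x_0,x_0)$, $n|x_n-y_n|^2\to 0$, $u(x_n)\to u(x_0)>0$, and $w(y_n)\to w(x_0)$. For $n$ large $u(x_n)>0$, so the boundary viscosity condition for $u$ (which would force $u\leq 0$ at a boundary test point) excludes $x_n\in\partial\O$; meanwhile $y_n$ sits in the interior of $K$, hence of $\O'$, so the supersolution property of $w$ at $y_n$ is available. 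Applying the Crandall--Ishii theorem of sums with parameter $\iota=1/n$ then produces $X_n,Y_n\in\mc{S}_N$ satisfying the sandwich bound of (H4) with $\sigma=n$, together with $F(x_n,u(x_n),n(x_n-y_n),X_n)\leq 0$ and $F(y_n,w(y_n),n(x_n-y_n),Y_n)\geq c$. Subtracting and inserting the intermediate term $F(y_n,u(x_n),n(x_n-y_n),Y_n)$, the difference splits into one piece controlled by (H3) via $|u(x_n)-w(y_n)|\to 0$ and another controlled by (H4) via the modulus $\omega(n|x_n-y_n|^2+|x_n-y_n|)\to 0$; the left-hand side therefore tends to $0$, contradicting the lower bound $c>0$ on the right.

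The main obstacle, and precisely the reason the definition of $\mu_1$ is framed on a strict enlargement $\O'$ of $\ol\O$, is the possibility that the contact point $x_0$ lies on $\partial\O$: a supersolution defined only in $\O$ would fail to yield the viscosity supersolution inequality at a penalty point $y_n\in\partial\O$, whereas having $\phi$ as a supersolution on the open set $\O'$ makes every point of $\ol\O$ an interior point of $\O'$, so the doubling argument proceeds uniformly regardless of whether $x_0$ lies in $\O$ or on $\partial\O$.
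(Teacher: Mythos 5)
Your proof follows the same overall strategy as the paper's: extract $\lambda>0$, $\O'\supset\ol\O$ and $\phi\in LSC(\O')$ from $\mu_1>0$, normalize so that the supersolution touches the subsolution from above, and run the Crandall--Ishii doubling argument to derive a contradiction from (H3)--(H4). The normalization you use (the infimum $t^*$ of dominating multiples of $\phi$) is equivalent to the paper's replacement of $\phi$ by $\left(\max_{\ol\O}\tfrac{\t u}{\phi}\right)\phi$, and the localization to a compact $K\subset\O'$ around $\ol\O$ is the same role played by shrinking $\O'$ in the paper.

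There is one genuinely wrong statement in your handling of the boundary: you write that for $n$ large, $u(x_n)>0$ ``excludes $x_n\in\partial\O$'' because the boundary viscosity condition would force $u\leq0$ there. This is a misreading of the relaxed Dirichlet condition, which does \emph{not} assert $u\leq0$ pointwise on $\partial\O$; it only asserts, at every test point $\xi\in\partial\O$, that $\max\bigl[u(\xi),\,F(\xi,u(\xi),D\vp(\xi),D^2\vp(\xi))\bigr]\leq0$. A subsolution can perfectly well be positive on $\partial\O$ (indeed this is exactly what can happen when \MP\ fails). The correct reading is the opposite of what you claimed: the point $x_n$ \emph{may} lie on $\partial\O$, but because $u(x_n)>0$, the max-condition collapses to $F(x_n,u(x_n),p,X)\leq0$, which is the inequality you actually need. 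So your conclusion is right but for the wrong reason, and the misstatement indicates a misunderstanding of the relaxed boundary condition that is central to the whole setup. The paper avoids this delicate case-checking entirely by extending $u$ to $\t u:=\max(u,0)\cdot\mathbf{1}_{\ol\O}$, showing $\t u$ is a subsolution of $F[\t u]\leq0$ in the \emph{open} set $\O'$, and then doubling variables on $\ol{\O'}\times\ol{\O'}$ where all the relevant points are interior to $\O'$; you may want to adopt this device, or else correct the sentence to reflect the actual content of the viscosity Dirichlet condition.

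One smaller point worth noting: applying the theorem of sums with $u\in USC(\ol\O)$ and $w\in LSC(K)$ on the product $\ol\O\times K$ uses the version for locally compact sets and semijets relative to $\ol\O$; this is legitimate (Theorem 3.2 of the user's guide is stated in that generality) but should be made explicit, since it is precisely what licenses testing $u$ at points $x_n\in\partial\O$.
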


\begin{proof}
If $\mu_1(F,\O)>0$ then there exist $\lambda>0$, $\O'\supset\ol\O$ and $\phi\in
LSC(\O')$ such that
$$\phi>0\text,\quad F[\phi]-\lambda\phi^\alpha\geq0\quad\text{in }\O'.$$
Up to shrinking $\O'$, it is not restrictive to assume that $\phi\in
LSC(\ol{\O'})$ and $\phi>0$ in $\ol{\O'}$.
Assume by contradiction that \eq{DP} admits a subsolution $u$ which is positive
somewhere in $\ol\O$. We claim that the function $\t u$ defined by
$$\t u(x):=\begin{cases} \max(u(x),0) & \text{if }x\in\ol\O\\
            0 & \text{otherwise},
           \end{cases}$$
satisfies $F[\t u]\leq0$ in $\O'$. Indeed, if $\psi$ is a
smooth function touching $\t u$ from above at some $x_0\in\O'$,
then either $\t u(x_0)=0$, or $\t u(x_0)=u(x_0)>0$ and $x_0\in\ol\O$. In the
first case $\psi$ has a local minimum at $x_0$ and then $F[\psi](x_0)\leq0$ by
(H1) and (H2),
in the second case $F[\psi](x_0)\leq0$ because $u$ is a subsolution of \eq{DP}.
Next, up to replacing $\phi$ with
$\left(\max_{\ol\O}\frac{\t u}\phi\right)\phi$, we can restrict the study to
the case where
$\max_{\ol{\O'}}(\t u-\phi)=0$.
Then, the standard doubling variable technique used to prove the comparison principle
yields a contradiction (see Theorem 3.3 in \cite{user}). Let us sketch the
argument. Define the following function on $\ol{\O'}\times\ol{\O'}$:
$$\Phi(x,y):=\t u(x)-\phi(y)-\frac n2|x-y|^2.$$
Calling $(x_n,y_n)$ a maximum point for $\Phi$ in
$\ol{\O'}\times\ol{\O'}$, we see that
$$0=\max_{x\in\ol{\O'}}\Phi(x,x)\leq\Phi(x_n,y_n)=\t u(x_n)-\phi(y_n)
-\frac n2|x_n-y_n|^2.$$
It follows that $x_n-y_n=o(1)$ as $n\to\infty$. Whence, since 
$\t u(x_n)-\phi(y_n)\geq0$, $x_n$ and
$y_n$ converge (up to subsequences) to a point $z$ where $\t u-\phi$
vanishes and $|x_n-y_n|^2=o(n^{-1})$. In particular, $z\in\ol\O$. We can
therefore apply Theorem 3.2 of \cite{user} and find that
$$
F(y_n, \phi(y_n),n(x_n-y_n),Y)-F(y_n,\t u(x_n),n(x_n-y_n), X)\geq
\lambda\phi^\alpha(y_n),
$$
for some $X,Y\in\mc{S}_{N}$ satisfying
$$
-3n
\left(\begin{matrix}
 I & 0 \\ 0 & I
\end{matrix}\right)
 \leq
\left(\begin{matrix}
 X & 0    \\ 0 & -Y
\end{matrix}\right)
 \leq 3n
\left(\begin{matrix}
 I & -I    \\ -I & I
\end{matrix}\right).
 $$
Since, as $n\to\infty$,  $\t u(x_n)-\phi(y_n)=o(1)$, using (H3), (H4) we
eventually derive
$$\lambda\phi^\alpha(y_n)\leq o(1)+\omega(n|x_n-y_n|^2+|x_n-y_n|).
$$
That is, $\phi(z)\leq0$, which is a contradiction.
\end{proof} 

Let us prove now that if $F$ satisfies \MP\ in $\O$ then $\mu_1(F,\O)>0$. 
This is a consequence of the following general property of $\mu_1$.

\begin{proposition}\label{pro:U}
Under the assumptions (H1)-(H4), there exists a nonnegative subsolution $U\in
USC(\oo)$,
$U\not\equiv0$, of the problem
$$
\begin{cases}
F[U]-\mu_1(F,\O) U^\alpha\leq0 &\hbox{in $\O$,} \\
U\leq 0 & \hbox{on $\po$}.
\end{cases}
$$
\end{proposition}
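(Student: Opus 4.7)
My plan is a two-stage Perron--compactness argument: I build non-trivial subsolutions at each $\lambda$ strictly greater than $\mu_1(F,\O)$, then let $\lambda\searrow\mu_1(F,\O)$ and pass to the limit via the half-relaxed upper limit.

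For the first stage, fix $\lambda>\mu_1(F,\O)$ and form the Perron envelope
$$
U_\lambda(x):=\sup\{v(x):v\in USC(\oo),\ 0\le v\le 1,\ v\le 0\text{ on }\po,\ F[v]-\lambda v^\alpha\le 0\text{ in }\O\}.
$$
Since $F[0]=0$ by (H2), $v\equiv 0$ is admissible and the class is nonempty. Standard Perron-type arguments, together with the stability of viscosity subsolutions and the comparison machinery of \cite{user} (which relies on (H4)), yield that the upper semicontinuous envelope $U_\lambda^*$ is a viscosity subsolution of $F[v]-\lambda v^\alpha\le 0$ in $\O$, inheriting both $0\le U_\lambda^*\le 1$ and $U_\lambda^*\le 0$ on $\po$.

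The crux is showing $U_\lambda^*\not\equiv 0$ whenever $\lambda>\mu_1(F,\O)$. I would proceed by contrapositive: assume that the only admissible sub is identically zero, and then construct a positive $\phi\in LSC(\ol{\O'})$ on some $\O'\supset\ol\O$ with $\ol{\O'}\subset\mc{O}$ satisfying $F[\phi]-\lambda\phi^\alpha\ge 0$ in $\O'$. Such a $\phi$ contradicts $\lambda>\mu_1(F,\O)\ge\l(F,\O')$. Concretely, I would take $\phi$ to be the infimum over positive viscosity supersolutions of $F[\psi]-\lambda\psi^\alpha\ge 0$ in $\O'$ with a fixed positive value on $\partial\O'$, and exploit the triviality of the sub envelope inside $\O$ together with the comparison principle from (H4) to prevent $\phi$ from collapsing to zero at any interior point of $\O'$. \emph{This is the main obstacle} of the proof, and it is precisely where the enlargement in the definition of $\mu_1$ (as opposed to $\l$ or $\ol\lambda_1$) enters essentially: only on a strict enlargement $\O'$ can one exploit triviality inside $\O$ without being obstructed by possible loss of boundary condition on $\po$.

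For the second stage, by the $\alpha$-homogeneity (H2) the subsolution property is invariant under $v\mapsto cv$ for $c>0$, so I normalize $\max_{\oo}U_\lambda^*=1$. Choosing $\lambda_n\searrow\mu_1(F,\O)$ and setting $U_n:=U_{\lambda_n}^*$, I define $U\in USC(\oo)$ as the half-relaxed upper limit of $(U_n)$. The continuity (H3) and the standard stability theorem for viscosity subsolutions yield that $U$ is a viscosity subsolution of $F[U]-\mu_1(F,\O)U^\alpha\le 0$ in $\O$. Picking $x_n\in\oo$ with $U_n(x_n)=1$ and extracting a convergent subsequence $x_n\to x^\star$ gives $U(x^\star)\ge 1$, so $U\not\equiv 0$. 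The boundary inequality $U\le 0$ on $\po$ follows from $U_n=0$ on $\po$ combined with a uniform boundary-decay estimate, obtained by comparing each $U_n$ with a fixed classical barrier constructed in a neighbourhood of $\po$ inside $\mc{O}$, where the operator is well defined.
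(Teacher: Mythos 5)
Your proposal diverges substantially from the paper's proof and, as you yourself flag, leaves the central step unresolved. The paper does not work with homogeneous Perron envelopes in the fixed domain $\O$; it uses \emph{inhomogeneous} Perron envelopes $w_n$ of subsolutions of $F[u]-(\mu_1-\tfrac1n)u^\alpha=1$, with the extra constraint that the subsolutions vanish outside $\ol\O_n$, where $(\O_n)$ is a decreasing family of smooth domains with $\bigcap_n\ol\O_n=\ol\O$, $\ol\O\subset\O_{n+1}\subset\O_n\subset\mc{O}$. Non-triviality is then obtained by a blow-up argument: if $\sup_{\O_n}w_n$ stayed bounded, the lower semicontinuous envelope $(w_n)_*$ would be a bounded nonnegative supersolution in $\O_n$, and for $n$ large, $(w_n)_*+\e$ would be a \emph{positive} LSC supersolution of $F[\cdot]-(\mu_1+\tfrac1n)(\cdot)^\alpha>0$ in $\O_n\supset\ol\O$, directly contradicting the definition of $\mu_1$. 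Your contrapositive route---\textquotedblleft if the Perron class in $\O$ is trivial, build a positive supersolution $\phi$ on some $\O'\supset\ol\O$ as the infimum of supersolutions with positive boundary data\textquotedblright---contains the real gap: you give no mechanism ensuring the class of such supersolutions on $\O'$ is non-empty, no argument that the infimum is an LSC supersolution, and, crucially, no argument that it stays strictly positive in the interior of $\O'$. Triviality of the subsolution envelope \emph{inside} $\O$ does not, via comparison, control what happens inside $\O'\setminus\ol\O$ or prevent interior touching to zero; this is precisely where the paper's choice to manufacture positivity from the inhomogeneity $1$ and the shrinking outer domains carries the argument, and it is not a detail one can wave away.

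The second gap is your boundary argument. You claim $U\leq 0$ on $\po$ via a \textquotedblleft fixed classical barrier constructed in a neighbourhood of $\po$.\textquotedblright\ Such barriers do not exist in general for degenerate elliptic operators---that is exactly why the paper introduces $\mu_1$ rather than $\ol\lambda_1$, and Sections 4.1--4.2 address when barriers (or the Fichera condition) allow one to recover $\ol\lambda_1=\mu_1$. The boundary inequality $U\leq 0$ on $\po$ is also understood in the \emph{viscosity} sense, not pointwise; in fact a pointwise bound should not be expected. Moreover, your intermediate claim that the USC envelope $U_\lambda^*$ inherits $U_\lambda^*\leq 0$ on $\po$ from the constraint $v\leq 0$ on $\po$ is not valid: the USC envelope takes $\limsup$ from the interior and can be strictly positive at the boundary. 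In the paper this entire issue is handled structurally: since $U$ is built as a half-relaxed limit of functions $u_n$ vanishing \emph{outside} $\ol\O_n$, one gets $U=0$ outside $\ol\O$ as a set in $\mc{O}$, and the viscosity Dirichlet condition at a boundary point $\xi$ with $U(\xi)>0$ follows by testing on $\mc{O}\cap B_\rho(\xi)$. This is the place where the enlargement built into the definition of $\mu_1$ is genuinely exploited, and your version discards it.
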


\begin{proof}
We construct the subsolution $U$ at the eigenlevel $\mu_1(F,\O)$ following
the method of \cite{BD07}: we solve the problem at level less than
$\mu_1(F,\O)$ with a positive right-hand side (say equal to 1) and we show 
that as the
level approaches $\mu_1(F,\O)$ the renormalized solutions tend to a function $U$
satisfying the desired property. An extra difficulty with respect to \cite{BD07}
is that $U$ could be positive somewhere on $\partial\O$, due to the lack of
existence of barriers. In order to show that $U\leq0$ on $\partial\O$ in the
viscosity sense, we combine the above procedure with an external approximation
of the domain $\O$. This is the point where the definition of $\mu_1$ is
really exploited.


Let $(\O_n)_{n\in\N}$ be a family of smooth domains such that
$$\bigcap_{n\in\N}\ol\O_n=\ol\O,\qquad\forall
n\in\N,\quad\ol\O\subset\O_{n+1}\subset\O_n\subset\mc{O}.
$$
For $n\in\N$, we consider subsolutions of the equation
\Fi{n}
F[u]-\left(\mu_1(F,\O)-\frac1n\right)u^\alpha=1\quad\text{in } \mc{O},
\Ff
whose support is contained in $\ol\O_n$. Following Perron's method, we 
define
$$
\forall x\in \mc{O},\quad
w_n(x):= \sup\{z(x) \ :\ z\in USC(\mc{O})\text{ is a subsolution of \eq{n}},
z=0\text{ outside }\ol\O_n\}.$$
The function $w_n$ could possibly be infinite at some -and even
any- point of $\ol\O_n$. Taking $z\equiv0$ yields $w_n\geq0$.
We claim that 
\Fi{blowup}
\lim_{n\to\infty}\sup_{\O_n}w_n=+\infty.
\Ff
Assume by way of contradiction that \eq{blowup} does not hold.
Then $(w_n)_{n\in\N}$ satisfies (up to subsequences) $\sup_{\O_n}w_n\leq C$, for
some $C$ independent of $n$.
For $n\in\N$, consider the
lower and upper semicontinuous envelopes of $w_n$:
$$
\forall x\in \mc{O},\quad (w_n)_*(x):=\lim_{r\to0^+}\inf_{|y-x|<r}
w_n(y),\quad
(w_n)^*(x):=\lim_{r\to0^+}\sup_{|y-x|<r} w_n(y).
$$ 
It follows from the standard theory (see Lemma 4.2 in \cite{user}) that 
$(w_n)^*$ is a subsolution of \eq{n}. Since the function $w_n$ vanishes outside 
$\ol\O_n$, its definition yields $w_n=(w_n)^*$. By Lemma 4.4 in 
\cite{user}, 
 if 
$(w_n)_*$ fails to be a supersolution of \eq{n} at some point in $\O_n$
then there exists a subsolution of \eq{n} larger than $w_n$ and still
vanishing outside $\ol\O_n$, which contradicts the definition of $w_n$.
Therefore,
$F[(w_n)_*]-(\mu_1(F,\O)+1/n)((w_n)_*)^\alpha\geq1$ in
$\O_n$,
and clearly $0\leq(w_n)_*\leq w_n\leq C$. 
As a consequence,
$$
F[(w_n)_*]-\left(\mu_1(F,\O)+\frac1n\right)((w_n)_*)^\alpha\geq
-\frac2n((w_n)_*)^\alpha+1\geq1-\frac{2C^\alpha}n\quad\text{in }\O_n.
$$
It follows that, for $n$ large enough, $(w_n)_*$ satisfies
$F[(w_n)_*]-(\mu_1(F,\O)+1/n)((w_n)_*)^\alpha>0$ in $\O_n$, and by (H3) the
same is true for $(w_n)_*+\e$, with $\e>0$ small enough.
This contradicts the definition of $\mu_1$, hence \eq{blowup} is proved.
%
There exists then a family $(z_n)_{n\in\N}$, with $z_n\in USC(\mc{O})$ 
subsolution of \eq{n} vanishing outside $\ol\O_n$, such that
$$\lim_{n\to\infty}\max_{\ol\O_n}z_n=+\infty.$$ 
Replacing $z_n$ with its
positive part, it is not restrictive to assume that $z_n\geq0$.
The functions $u_n$ defined by
$$u_n(x):=\frac{z_n(x)}{\max_{\ol\O_n}z_n},$$
satisfy
$$u_n=0 \text{ outside }\ol\O_n,\qquad\max_\mc{O}u_n=1,\qquad 
F[u_n]-\left(\mu_1(F,
\O)-\frac{1}n\right)u_n^\alpha\leq
\left(\max_{\ol\O_n}z_n\right)^{
-\alpha } \quad\text{in }\mc{O}.$$ 
Define the function $U$ by setting
$$\forall x\in\mc{O},\quad
U(x):= \lim_{j\to\infty}\sup\{u_n(y) \ :\ n\geq j,\ |x-y|<1/j\}.$$ 
By stability of viscosity subsolutions (see e.g.~Remark 6.3 in \cite{user}), we
know that $U$ satisfies
$F[U]-\mu_1(F,\O)U^\alpha\leq0$ in $\mc{O}$. 
Moreover, $U=0$ outside $\ol\O$ and $\max_{\ol\O}U=1$. It remains to show that
$U$ satisfies the Dirichlet condition on $\partial\O$ in the relaxed viscosity
sense. Suppose that there exists $\xi\in\partial\O$, $\rho>0$ and
$\vp\in C^2(\ol\O)$ such that
$$U(\xi)>0,\qquad \sup_{\ol\O\cap B_\rho(\xi)}(U-\vp)=(U-\vp)(\xi)=0.$$
By continuity of $F$, we can assume that $\vp$ is a paraboloid, thus
defined in the whole $\R^N$. Up to decreasing $\rho$ if need be, we have that
$\vp>0$ in $B_\rho(\xi)$. Since $U=0$ outside $\ol\O$, we infer that 
$\sup_{\mc{O}\cap B_\rho(\xi)}(U-\vp)=(U-\vp)(\xi)=0$, whence
$F[\vp](\xi)-\mu_1(F,\O)\vp^\alpha(\xi)\leq0$.
\end{proof}

As a corollary, we immediately deduce that if $F$ satisfies \MP\ in
$\O$ then $\mu_1(F,\O)>0$. 

\begin{remark}\label{rem:pef}
The function $U$ constructed in the above proof is a good candidate for being
the principal eigenfunction of $F$ in $\O$, i.e., a
positive solution of 
\Fi{pef}\begin{cases}
F[U]-\m U^\alpha=0 & \text{in }\O\\
U=0 & \text{on }\partial\O.
\end{cases}\Ff
However, this is not true in general. There are indeed operators which
do not admit a
principal eigenfunction. It is clearly the case if
$\m=+\infty$, as for instance for the operator $F[u]=u'$. 
An example with $\m$ finite is given by the operator $F[u]=x^2 u'$ in
$\O=(-1,1)$. Indeed, the
indicator function of $\{0\}$ violates \MP, and then $\mu_1(F,\O)\leq0$ by
\thm{main}. On the other hand, $\mu_1(F,\O)\geq0$, as it is seen by taking
$\phi\equiv1$ in the definition. Hence, $\mu_1(F,\O)=0$. But the unique solution
of \eq{pef} is $U\equiv0$.
\end{remark}


\section{Conditions for the equivalence between $\mu_1$ and $\ol\lambda_1$}

\thm{main} provides a characterization of the \MP\ property in terms of the sign
of the
generalized \pe\ $\mu_1$. We do not know if $\mu_1$ can be replaced by the more
intrinsic notion $\ol\lambda_1$, that is, if $\mu_1$ and $\ol\lambda_1$
always have the same
sign.
This property reduces to the equivalence of $\mu_1$ and $\ol\lambda_1$,
because they
satisfy
$$\forall \lambda\in\R,\quad
\ol\lambda_1(F[u]+\lambda u^\alpha,\O)=\ol\lambda_1(F,\O)+\lambda,\quad
\mu_1(F[u]+\lambda u^\alpha,\O)=\mu_1(F,\O)+\lambda.$$

Let us see what happens if we try to follow the arguments in the proof of
Proposition \ref{pro:CS} with $\m$ replaced by $\ol\lambda_1(F,\O)$. The
difference
is that now the supersolution $\phi$ is only defined in $\O$, but still has
positive infimum. Setting
$\phi(\xi):=\liminf_{x\to\xi}\phi(x)$ for $\xi\in\partial\O$,
one sees that the arguments fail only if the points $y_n$ used in the
proof belong to 
$\partial\O$. 
This difficulty can be overcome if at any $\xi\in\partial\O$ one of the
following occurs:
\Fi{Fichera-si} 
\text{any subsolution $u\in USC(\ol\O)$ of \eq{DP} satisfies $u(\xi)\leq0$,}
\Ff
\Fi{Fichera-no}
\begin{array}{c}
\text{any strictly positive supersolution $\phi$ of
$F[\phi]=\lambda\phi^\alpha$ in $\O$, $\lambda>0$,}\\
\text{is a supersolution in $\O\cup\Gamma$, for some neighbourhood $\Gamma$ of 
$\xi$.}
\end{array}
\Ff
Indeed, the limit $\xi$ of (a subsequence of) $y_n$ cannot satisfy
\eq{Fichera-si},
but if \eq{Fichera-no} holds one can conclude exactly as in  the 
the proof of Proposition \ref{pro:CS}.

This Section is devoted 
to establish sufficient conditions for either \eq{Fichera-si} or \eq{Fichera-no} to occurr, in order  to have $\mu_1=\ol \lambda_1$.
Under suitable assumptions on $F$, the case \eq{Fichera-si} is guaranteed by the
existence of a continuous {\em barrier} (see Definition \ref{def:barrier}
below).
This is shown in Section \ref{sec:barriers}. In Section \ref{sec:linear}  we show that, for linear operators,  $\mu_1=\ol\lambda_1$  if the boundary only contains connected components where the so--called Fichera condition is satisfied or violated.


\subsection{Problems with barriers}\label{sec:barriers}

Here is the definition of barrier.
\begin{definition}\label{def:barrier}
We say that a point $\xi\in\partial\O$ admits a (continuous)
{\em barrier} if there exists a ball $B$ centred at $\xi$
and a nonnegative function $w\in C(\ol{\O\cap B})$ vanishing at $\xi$ and
satisfying $F[w]\geq1$ in $\O\cap B$. 
\end{definition}
We will need the following extra assumptions on $F$ in an open neighbourhood
$V$ of $\partial\O$:
\begin{itemize}
  \item[(H5)]\label{continuity}
For all $R>0$, $(r,p,X)\mapsto F(x,r,p,X)$ is uniformly continuous in
$[0,R]\times\R^N\times\mc{S}_N$, uniformly with respect to $x\in\ol\O\cap V$.
 \item[(H6)]\label{somma'}
For all $R>0$, there exists $K>0$ such that
if $X,Y\in\mc{S}_N$ satisfy
$$\exists\sigma>0,\quad
-\sigma\left(\begin{matrix}
               I & 0 \\
0 & I
              \end{matrix}
\right)
\leq\left(\begin{matrix}
               X & 0 \\
0 & -Y
              \end{matrix}
\right)\leq\sigma\left(\begin{matrix}
               I & -I \\
-I & I
              \end{matrix}
\right),$$
then
$$F(y,r,p,Y)-F(x,r,p,X)\leq K(1+|x-y||p|+\sigma|x-y|^2),
\quad\forall x,y\in\ol\O\cap V,\ |r|\leq R,\ p\in\R^N.$$
\end{itemize}

\begin{remark}
Condition (H5) implies that the degree of homogeneity $\alpha$ in (H2) must be  less
than or equal to $1$.  

Overall, conditions (H4)--(H6) (or close variations) are often required in the context of comparison  of viscosity solutions possibly discontinuous at the boundary, see e.g. assumptions (7.15)--(7.16) in \cite{user}, Section 7.
\end{remark}

%
\begin{theorem}\label{thm:l''=mu}
If (H1), (H2), (H4)-(H6) hold, $\O$ is smooth and every point
$\xi\in\partial \O$
admits a barrier, then $\m=\ol\lambda_1(F,\O)$.
\end{theorem}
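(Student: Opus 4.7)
The plan is to prove $\mu_1(F,\O)\ge\ol\lambda_1(F,\O)$, which combined with the already-noted inequality $\mu_1\le\ol\lambda_1$ gives equality. Using the shift property recalled at the beginning of this section, this is equivalent to showing that $\mu_1(F,\O)$ and $\ol\lambda_1(F,\O)$ always have the same sign; and in view of \thm{main} together with $\mu_1\le\ol\lambda_1$, the problem reduces to proving that $\ol\lambda_1(F,\O)>0$ implies that $F$ satisfies \MP\ in $\O$.

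To prove this implication I would revisit the argument of Proposition \ref{pro:CS}, now with $\phi$ taken from the definition of $\ol\lambda_1$ rather than that of $\mu_1$. By hypothesis there exist $\lambda>0$ and $\phi\in LSC(\O)$ with $\inf_\O\phi>0$ and $F[\phi]-\lambda\phi^\alpha\ge0$ in $\O$, and I would extend $\phi$ to $\ol\O$ by its lower semicontinuous envelope $\phi(\xi):=\liminf_{x\to\xi}\phi(x)$, which preserves the positive infimum. Assuming for contradiction that $u$ is a subsolution of \eq{DP} with $\max_{\ol\O}u>0$, I would normalize so that $\max_{\ol\O}(\t u-\phi)=0$ (where $\t u$ is the extension of the positive part of $u$ by zero) and run the standard doubling-variable argument on $\ol\O\times\ol\O$. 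This produces sequences $(x_n,y_n)$ converging to a common point $z\in\ol\O$ with $\t u(z)=\phi(z)>0$ and $n|x_n-y_n|^2\to0$. If $z\in\O$, the very computation of Proposition \ref{pro:CS} (relying on (H3) and (H4)) yields $\phi(z)\le0$, a contradiction.

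The remaining case $z\in\partial\O$ is the core of the proof, and it is handled by establishing property \eq{Fichera-si} at every boundary point admitting a barrier: no viscosity subsolution of \eq{DP} can be strictly positive at such a point. Granted \eq{Fichera-si} at $\xi=z$, one has $u(z)\le0$, hence $\t u(z)=0<\inf_\O\phi\le\phi(z)$, again a contradiction, completing the proof.

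Hence the main obstacle is the implication ``barrier at $\xi$ $\Longrightarrow$ every subsolution of \eq{DP} satisfies $u(\xi)\le0$''. My plan for this step is classical in spirit. Given a subsolution $u$ with $u(\xi)>0$, the homogeneity (H2) upgrades the barrier $w$ into a strict local supersolution $Cw$ with $F[Cw]\ge C^\alpha$ in $\O\cap B$; since $w$ is continuous, nonnegative and vanishes only at $\xi$, for $C$ sufficiently large $Cw$ dominates $u$ on $\O\cap\partial B$, while on $\partial\O\cap\ol B$ the Dirichlet condition in the relaxed viscosity sense together with $w\ge0$ provides the corresponding boundary comparison. The conclusion $u\le Cw$ in $\O\cap B$, which contradicts $u(\xi)>0=Cw(\xi)$, would then follow from a doubling-variable comparison localized near $\xi$. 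The additional conditions (H5) and (H6) are designed precisely for this up-to-the-boundary argument: (H5) provides uniform continuity in the lower-order variables up to $\partial\O$, and (H6) controls the increment $F(y,\cdot,p,Y)-F(x,\cdot,p,X)$ uniformly in the possibly unbounded gradient $p=\sigma(x-y)$ produced by the doubling procedure at the boundary, in the spirit of Section 7 of \cite{user}. Carrying out this local comparison in the present $\alpha$-homogeneous degenerate framework is the genuine technical work of the proof.
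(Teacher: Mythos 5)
Your overall structure matches the paper's: reduce (via the shift property and Theorem \ref{thm:main}) to showing that $\ol\lambda_1(F,\O)>0$ implies \MP, which in turn, by revisiting the proof of Proposition \ref{pro:CS} with $\phi$ from the $\ol\lambda_1$ class, reduces to establishing \eq{Fichera-si} at every boundary point that admits a barrier. This is exactly the reduction made in the preamble of Section 4, and the statement ``barrier at $\xi$ implies every subsolution of \eq{DP} satisfies $u(\xi)\le0$'' is precisely Proposition \ref{pro:barrier}. Your handling of the case $z\in\O$ and the exclusion of $z\in\partial\O$ via \eq{Fichera-si} is also sound.

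There is, however, a genuine gap in your proposed proof of the barrier step. You aim to show $u\le Cw$ in $\O\cap B$ for some fixed constant $C$, and then contradict $u(\xi)>0=Cw(\xi)$. But since $w(\xi)=0$, under the contradiction hypothesis $u(\xi)>0$ the inequality $u\le Cw$ already \emph{fails} at $\xi$ for every $C$; no comparison argument can establish it, so the scheme is self-defeating. The mechanism of the contradiction has to be different. The paper works instead with the $\e$-regularized barrier $w_\e:=w+\e$ and the ratio $k_\e:=\max_{\ol{B\cap\O}} u/w_\e$, which satisfies $k_\e\ge u(\xi)/\e\to\infty$ as $\e\to0^+$. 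The shifted doubling functional
\[
\Phi(x,y)= u(x)- k_\vep w_\e(y)- \bigl|n(x-y)+\de \,\nu(x_\vep)\bigr|^2 -\de|x-x_\vep|^2,
\]
where $\nu$ is the inner unit normal and $x_\e$ is a maximizer of $u/w_\e$, is the device (as in Theorem 7.9 of \cite{user}) that forces the supersolution testing points $y_n$ strictly into $\O$, so the inequality $F[w]\ge1$ can actually be used; your sketch does not mention this shift, and without it the doubling maximizer may stick to $\partial\O$, where the supersolution property of $w$ is unavailable. Finally, (H5) and (H6) are invoked not merely for ``uniform continuity up to the boundary'' in the abstract, but to bound $F(y_n,\cdot,q,Y)-F(x_n,\cdot,p,X)$ by a constant $2K$ \emph{independent of $\e$}, yielding $k_\e^\alpha\le 2K$ and hence the contradiction with $k_\e\to\infty$. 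This ratio-plus-interior-shift argument, and not a fixed-constant comparison $u\le Cw$, is the actual content of Proposition \ref{pro:barrier}; it is the step you flagged as ``the genuine technical work,'' and your proposed route to it would not go through as written.
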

As explained before, in order to prove the result it is sufficient to show that
\eq{Fichera-si} holds at every $\xi\in\partial\O$.
\thm{l''=mu} is then a consequence of the following
%
%
%
\begin{proposition}\label{pro:barrier}
Assume that $F$ satisfies (H1), (H2), (H5), (H6), that $\O$ is a smooth domain
and that there exists $\xi\in \partial \O$
admitting a barrier. Then every subsolution $u\in USC(\ol\O)$ of
\eq{DP} satisfies $u(\xi)\leq 0$. 
%
\end{proposition}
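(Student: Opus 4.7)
The strategy is to argue by contradiction. Suppose $u(\xi)>0$. We will exploit the barrier $w$, rescaled through (H2) to a strict viscosity supersolution $\la w$ of $F[v]=0$ with margin $\la^\alpha$ as large as we wish in $\O\cap B$, and compare it with $u$ by a doubling-of-variables argument in the spirit of Proposition \ref{pro:CS}.

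First I would normalize the data: shrinking $B$ and modifying $w$ if necessary, assume $w>0$ on $\ol{\O\cap B}\setminus\{\xi\}$; pick $\rho\in(0,r)$ with $B_\rho(\xi)\subset B$; let $R:=\sup_{\ol{\O\cap B_\rho(\xi)}}u<\infty$ (by upper semicontinuity). Choose $\la$ so large that $\la\min_{\partial B_\rho(\xi)\cap\ol\O}w>R$. Then set, for $\sigma>0$,
\[
\Psi_\sigma(x,y)=u(x)-\la w(y)-\frac{\sigma}{2}|x-y|^2,\qquad (x,y)\in\ol{\O\cap B_\rho(\xi)}\times\ol{\O\cap B_\rho(\xi)},
\]
and let $(x_\sigma,y_\sigma)$ be a maximizer. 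From $\Psi_\sigma(\xi,\xi)=u(\xi)>0$ and the choice of $\la$, the standard doubling analysis gives that the maximum is not attained on the outer face $\partial B_\rho(\xi)\cap\ol\O$, and that $x_\sigma,y_\sigma\to\xi$ with $\sigma|x_\sigma-y_\sigma|^2\to0$ as $\sigma\to\infty$.

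Next, the observation $\Psi_\sigma(x_\sigma,y_\sigma)\leq u(x_\sigma)$ combined with $\Psi_\sigma(x_\sigma,y_\sigma)\geq u(\xi)>0$ gives $u(x_\sigma)>0$. If $x_\sigma\in\partial\O$, then the function $\vp(x):=\la w(y_\sigma)+\frac{\sigma}{2}|x-y_\sigma|^2$, cut off outside $B_\rho(\xi)$ so as to be globally $C^2(\ol\O)$, touches $u$ from above at $x_\sigma$, and the relaxed boundary condition in Definition \ref{def:vMP} forces $u(x_\sigma)\leq 0$, a contradiction. Hence $x_\sigma\in\O$. In the ``generic'' case $y_\sigma\in\O$, Ishii's lemma (Theorem 3.2 of \cite{user}) yields $X,Y\in\mc{S}_N$ with the standard matrix estimate, and
\[
F(x_\sigma,u(x_\sigma),\sigma(x_\sigma-y_\sigma),X)\leq 0,\qquad F(y_\sigma,\la w(y_\sigma),\sigma(x_\sigma-y_\sigma),Y)\geq\la^\alpha.
\]
Subtracting, and bounding the difference by (H4) (on the $(x,X)$ versus $(y,Y)$ part at frozen $r=u(x_\sigma)$) together with (H5) (on the $r$-variation, noting that $0\leq\la w(y_\sigma)\leq u(x_\sigma)\leq R$ so the uniform modulus $\omega_R$ depends only on $R$ and \emph{not} on $\la$), one obtains
\[
\la^\alpha\leq\omega\bigl(\sigma|x_\sigma-y_\sigma|^2+|x_\sigma-y_\sigma|\bigr)+\omega_R(2R).
\]
Letting $\sigma\to\infty$ yields $\la^\alpha\leq\omega_R(2R)$, a bound independent of $\la$, and choosing $\la$ large gives the contradiction.

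The main obstacle will be the remaining possibility $y_\sigma\in\partial\O$, where the interior supersolution inequality for $\la w$ is not available since the barrier is only known to satisfy $F[w]\geq 1$ in the open set $\O\cap B$. This is precisely where smoothness of $\O$ and the strengthened structure condition (H6) are designed to enter. My plan is to perturb the doubling so that the evaluation of $w$ always falls inside $\O$: for instance, replace $\la w(y)$ by $\la w(y-\e\nu_\xi)$, where $\nu_\xi$ is the outward unit normal to $\O$ at $\xi$ and $\e>0$ is small, so that for $y\in\ol\O$ close enough to $\xi$ the shifted point lies in $\O\cap B$ and the supersolution property applies. The price is that the $x$-argument of $F$ in the resulting inequality becomes $y_\sigma-\e\nu_\xi$ rather than $y_\sigma$; the ensuing discrepancy involves a gradient $p=\sigma(x_\sigma-y_\sigma)$ of potentially large size, which is precisely the regime controlled by (H6) rather than (H4). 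After first passing $\sigma\to\infty$ and then $\e\to 0$, the inequality $\la^\alpha\leq C$ with $C$ independent of $\la$ persists, closing the contradiction.
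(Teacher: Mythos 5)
Your overall strategy (contradiction, rescale the barrier by (H2) to make it a supersolution with an arbitrarily large positive right‐hand side, doubling of variables, get a bound on $\la^\alpha$ independent of $\la$) is the same as the paper's, and you correctly diagnose that the whole difficulty is the possibility that the $y$-point in the doubling lands on $\partial\O$, where the barrier's supersolution inequality is not available. The problem is that your proposed fix for this case does not close. You shift the barrier's argument by a \emph{fixed} inward displacement, replacing $\la w(y)$ by $\la w(y-\e\nu_\xi)$, and then plan to ``first pass $\sigma\to\infty$, then $\e\to0$.'' But after Ishii's lemma the matrix pair $(X,Y)$ satisfies the inequality at level comparable to $\sigma$, and (H6) must now be applied at the two space points $x_\sigma$ and $y_\sigma-\e\nu_\xi$, whose distance is $\approx\e$ and does \emph{not} shrink with $\sigma$. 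The resulting error term in (H6) is $K\bigl(1+|x_\sigma-(y_\sigma-\e\nu_\xi)||p|+\sigma|x_\sigma-(y_\sigma-\e\nu_\xi)|^2\bigr)\approx K(1+\e|p|+\sigma\e^2)$, and $\sigma\e^2\to\infty$ as $\sigma\to\infty$ with $\e$ fixed. So the bound $\la^\alpha\leq C$ you need is lost precisely in the case you were trying to handle, and the order of limits you propose cannot be carried out.

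The paper's proof avoids this by building the inward shift \emph{inside} the penalization rather than shifting the argument of $w$: it takes $\Phi(x,y)=u(x)-k_\e w_\e(y)-|n(x-y)+\de\nu(x_\e)|^2-\de|x-x_\e|^2$. At the maximum one gets $n(x_n-y_n)+\de\nu(x_\e)=o(1)$, hence $y_n\approx x_n+\frac{\de}{n}\nu(x_n)$, so $y_n$ lies in $\O$ for $n$ large \emph{and} the effective separation $|x_n-y_n|$ is of order $\de/n$. Consequently $n^2|x_n-y_n|^2$ is of order $\de^2$, so the (H6) error stays bounded (by a constant $K$ times $1+O(\de^2)$), which is exactly what lets one conclude $k_\e^\alpha\lesssim K$ and then send $\e\to0$. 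In short, the shift has to be tied to the penalization parameter, so that it vanishes at the same rate as $|x_n-y_n|$; a fixed exterior shift produces an unbounded error. You would need to rework your argument with this coupled shift (or an equivalent device such as $\e=\e(\sigma)$ chosen with a rate, which you do not have a priori) to fill the gap. A minor additional point: the proposition assumes (H1), (H2), (H5), (H6) but not (H4), so the appeal to (H4) for the interior case should be replaced by (H6), which suffices to yield a $\la$-independent bound.
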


\begin{proof}
Let $w\in C(\ol{\O\cap B})$ be the barrier at $\xi$, provided by Definition
\ref{def:barrier}. Conditions (H2), (H5) imply that, up to replacing $w$ with
$2w+k|x-\xi|^2$, with $k>0$ small enough,
it is not restrictive to assume that $w>0$ outside the point $\xi$.
We can also suppose without loss of generality that
$w\geq1>u$ on $\ol\O\cap\partial B$. Assume by contradiction that
$u(\xi)>0$. For $\e>0$, we set
$$w_\e=w+\vep,\qquad
k_\vep:= \max\limits_{\overline{B\cap \Omega}} \,\,\frac{u}{w_\e}\,.
$$
Let $x_\vep$ be a  point where $k_\vep$ is attained. Since $k_\vep \geq
\frac{u(\xi)}\vep$, we have that, as $\e\to0^+$, $k_\vep\to \infty$, whence
$x_\e\to\xi$. 
Then, it makes sense to use $\nu(x_\vep):=D d(x_\vep)$, where
$d(x)$ is the signed distance function from $\partial\O$, positive inside $\O$
and smooth in a neighbourhood of $\partial \O$.  We  follow now the
strategy of the strong comparison principle when comparing a continuous
supersolution with a possibly discontinuous subsolution, see Theorem
7.9 in \cite{user}.  We consider the function
$$
\Phi(x,y)= u(x)- k_\vep w_\e(y)- |n(x-y)+\de \nu(x_\vep)|^2
-\de|x-x_\vep|^2\qquad x,y\in B_\rho\cap \ol \Omega\,,
$$
where $n$, $\de>0$. Let then $(x_n, y_n)\in \ol\O$ be such that
$$
\Phi(x_n,y_n)= \max_{\ol{B\cap\O}}\Phi(x,y)\,.
$$
Of course the two points also depend on $\de$, $\vep$ but we avoid to stress
this fact to simplify the notation.  We have that $\Phi(x_n,y_n)\geq
\Phi(x_\vep, x_\vep)=-\de^2$.
Furthermore, for $n$ large, $x_\vep+\frac \de n \nu(x_\vep)\in B\cap
\Omega$, hence $\Phi(x_n,y_n)\geq \Phi(x_\vep, x_\vep+\frac \de n \nu(x_\vep))$,
which implies
$$
|n(x_n-y_n)+\de \nu(x_\vep)|^2+ \de|x_n-x_\vep|^2 \leq u(x_n)- k_\vep w_\e(y_n)
- u(x_\vep) + k_\vep w_\e(x_\vep + \frac \de n \nu(x_\vep))\,.
$$
Since $w$ is continuous, we have $w_\e(x_\vep + \frac \de n \nu(x_\vep))=
w_\e(x_\vep) + o(1)$ as $n\to \infty$, hence, using also that
$k_\e w_\e(x_\vep)=u(x_\e)$, we deduce
$$
|n(x_n-y_n)+\de \nu(x_\vep)|^2+ \de|x_n-x_\vep|^2 \leq u(x_n)- k_\vep w_\e(y_n)+
o(1) \quad \hbox{as $n\to \infty$.}
$$
We first use this inequality to infer that
both $x_n$ and $y_n$ converge to $x_\vep$ as $n$ tends to infinity. Then,
together with the upper semicontinuity of $u$ and the fact that $u-k_\e w\leq0$,
it implies that $n(x_n-y_n)+\de \nu(x_\vep)=o(1)$ as $n\to
\infty$. Since $\nu$ is continuous we eventually derive
$$
y_n= x_n+ \frac \de n \nu(x_n) + o\left( \frac 1n\right)\qquad \hbox{ as $n\to \infty$.}
$$
It follows that $y_n\in \Omega$ for $n$ large. This allows us to
use the
equation of $w$ as a
supersolution. As far as $u$ is concerned, we have
$$
u(x_n)\geq \Phi(x_n,y_n)+ k_\vep w_\e(y_n)\geq \vep k_\vep - \de^2\,.
$$
Choosing $\de$ small enough, compared to $\vep k_\vep$, we get that $u(x_n)>0$
so that we can use the equation of $u$ at $x_n$ even if
$x_n\in \partial \O$. Usual viscosity arguments (see Theorem 3.2 in 
\cite{user}) yield
\Fi{>k}
F(y_n, k_\vep w_\e(y_n), q, Y)-F(x_n, u(x_n), p, X)\geq k_\vep^\alpha\,,
\Ff
where $p=2n(n(x_n-y_n)+\de \nu(x_\vep))
+2\delta(x_n-x_\vep)$, $q=
2n(n(x_n-y_n)+\de \nu(x_\vep))$, $X$, $Y$ satisfy
$$
-  (2n^2+\| A\|)
\left(\begin{matrix}
 I & 0 \\ 0 & I
\end{matrix}\right)
 \leq
\begin{pmatrix}
 X & 0    \\
\noalign{\medskip}
0 & -Y
\end{pmatrix}
 \leq A+
 \frac1{2n^2} \,A^2,\qquad A= \begin{pmatrix}
 2n^2I+2\de I & -2n^2 I   \\
\noalign{\medskip}
-2n^2 I & 2n^2I
\end{pmatrix}.
 $$
Since
$$\|A\|\leq4n^2+2\delta,\qquad 
A^2\leq4n^2(2n^2+\delta)\left(\begin{matrix}
                               I & -I \\ -I & I
                              \end{matrix}
\right)+4\delta(\delta+n^2)\left(\begin{matrix}
                               I & 0 \\ 0 & I
                              \end{matrix}
\right),$$
we derive
$$-(6n^2+2\delta+\beta)\left(\begin{matrix}
 I & 0 \\ 0 & I
\end{matrix}\right)
 \leq
\begin{pmatrix}
 X-\beta I & 0    \\
\noalign{\medskip}
0 & -(Y+\beta I)
\end{pmatrix}
 \leq (6n^2+2\delta)
\left(\begin{matrix}
   I & -I \\ -I & I
\end{matrix}\right),\quad \beta=4\delta+\frac{2\delta^2}{n^2}.$$
Hence, by (H6), there is $K$ such that, for bounded $r$,
$$F(y_n, r, q, Y+\beta I)-F(x_n, r, q, X-\beta I)\leq K
[1+(6n^2+2\delta+\beta)|x_n-y_n|^2+|x_n-y_n|(|q|+1)].$$
Since $-\delta^2\leq u(x_n)-k_\e w_\e(y_n)\leq o(1)$ as $n\to\infty$, by (H5) we
can choose $\delta$ small enough and  $n$ large in such a way that
$$F(y_n,k_\e w_\e(y_n),q,Y)-F(x_n,u(x_n),p,X)\leq2K.$$
Whence, by \eq{>k}, $k_\e^\alpha\leq2K$, which is impossible since
$k_\vep\to \infty$ as $\e\to0^+$.
%
\end{proof}

%

\begin{remark}\rm  At least if $F$ is linear, the existence of  a global smooth
barrier implies that $\mu_1=\la_1$. 

Namely, assume that there exists $v\in C^2$ such that $F[v]\geq 1$ in some 
neighbourhood of $\partial \O$ and $v=0$  on $\partial \O$.  
Let $\la>0$ be such that  $F[\vfi]\geq \la \vfi$ for some $\vfi\in LSC(\O)\cap
L^\infty(\O)$ such that $\vfi>0$ in $\O$. There exists $\rho>0$ (only depending
on $\la$) such that $F[v] \geq \la v+\frac12$ if $d(x)<\rho$. Let us take any
small $\vep>0$; if $\zeta$ is a cut--off function such that $\zeta=1$ if
$d(x)<\rho$ and $\zeta=0$ if $d(x)\geq 2\rho$, we claim that  $w:=\vfi+
\de(v+\vep)\zeta$ is a supersolution of $F[w]\geq (\la -\vep) w$ in $\Omega$ up
to choosing a  suitable $\de$. Indeed, if $d(x)\geq 2\rho$, since $\zeta=0$ we
have
$$
F[w]= F[\vfi] \geq \la \vfi=\la w,
$$
whereas, if $d(x)<\rho$, since $\zeta=1$ we see that
$$
F[w] \geq F[\vfi]+ F[\de(v+\vep )]\geq \la \vfi+ \de \la v+\frac12\de+ c(x) \de
\vep \geq \la w +\de( \frac12- (\la+ |c(x)|)  \vep)  \geq \la w 
$$
if $\vep$ is small.
In the set $\{\rho<d(x)<2\rho\}$ we have
$F[\de(v+\vep)\zeta]\geq - C(v, \zeta) \de$ and $\inf \vfi>0$, whence
$$
F[w] \geq \la \vfi - C(v,\zeta)\de\geq (\la-\vep)w+ \vep\vfi - \tilde C \de \geq (\la-\vep) w
$$
provided $\de$ is sufficiently small. Finally, for any $\vep>0$ we can find $\de$   such that
$F[w]\geq (\la-\vep) w
$ in $\O$, and since $w>0$ in $\ol\O$ we deduce that $\la_1''(\O)\geq \la-\vep$. Since $\la$ was any value smaller than $\la_1$ and $\vep$ is arbitrary, we conclude that $\la_1''\geq \la_1$, and therefore $\la_1''=\la_1=\mu_1$.
\end{remark}


\subsection{Linear operators}\label{sec:linear}

In this section $F$ is a degenerate elliptic linear
operator. Namely,
$$F[u]=-\Tr(A(x)D^2u)-b(x)\.Du-c(x)u,\quad x\in\mc{O},$$
with $A=\Sigma^t\Sigma$, $\Sigma:\ol\O\to\mc{S}_N$, $b:\ol\O\to\R^N$ and
$c:\ol\O\to\R$.
We will require that
\Fi{hyp:linear}
\Sigma,b\in W^{1,\infty}(\ol\O),\qquad c\in C(\ol\O),
\Ff
As shown in Example 3.6 of \cite{user}, the Lipschitz continuity of $\Sigma$
is precisely the condition for the second order term to satisfy (H4). The
Lipschitz continuity of $b$ could be relaxed by 
\Fi{b}
\exists K>0,\quad\forall x,y\in\ol\O,\quad (b(x)-b(y))\.(x-y)\geq-K|x-y|^2
\Ff
in order to fulfil (H4), but $b\in W^{1,\infty}(\ol\O)$ is needed to have 
(H6). 

We say that the {\em Fichera condition} is satisfied
at a point
$\xi\in\partial\O$ if one of the following two cases occurs:
$$Dd(\xi)A(\xi)Dd(\xi)>0,\quad\text{or }\quad
\begin{cases}
Dd(\xi)A(\xi)Dd(\xi)=0\\ 
\Tr(A(\xi)D^2d(\xi))+ b(\xi)\cdot Dd(\xi) <0,
\end{cases}$$
where, as before, $d$ is the signed distance function from $\partial\O$,
positive inside $\O$. This condition was introduced by G.~Fichera in
\cite{Fichera}  in order to study the question whether the Dirichlet condition should be assumed or not at boundary points. See also \cite{BarBur} for a discussion of the same problem in terms of viscosity solutions.

\begin{theorem}\label{thm:l''=mulinear}
If \eq{hyp:linear} holds, $\O$ is smooth and in every connected component of
$\partial\O$ the Fichera condition is either always satisfied or always
violated, then $\mu_1(F,\O)=\ol\lambda_1(F,\O)$.
\end{theorem}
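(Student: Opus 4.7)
Since $\mu_1(F,\O)\leq\ol\lambda_1(F,\O)$ in general, and both quantities transform by addition of $c$ under the shift $F\mapsto F+c u^\alpha$, the identity $\mu_1=\ol\lambda_1$ reduces to the assertion that both have the same sign for every shift, and thus, via \thm{main}, to showing that $\ol\lambda_1(F,\O)>0$ implies the \MP\ property. The plan is to redo the doubling-variable proof of Proposition~\ref{pro:CS} with $\mu_1(F,\O)$ replaced by $\ol\lambda_1(F,\O)$, using the strictly positive supersolution $\phi\in LSC(\O)$ with $\inf_\O\phi>0$, extended to $\po$ by $\phi(\xi):=\liminf_{\O\ni y\to\xi}\phi(y)\ge\inf_\O\phi>0$. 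As observed in the discussion preceding Section~\ref{sec:barriers}, the only obstruction is that the maximizing pair $(x_n,y_n)$ may converge to a boundary point $\xi\in\po$ with $y_n\in\po$; at such $\xi$, one needs either \eq{Fichera-si} or \eq{Fichera-no}. I would dichotomize on whether the connected component of $\po$ containing $\xi$ is Fichera-satisfied or Fichera-violated, as the standing hypothesis ensures this type is uniform on the whole component.

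If $\xi$ lies in a Fichera-satisfied component, I would construct a continuous barrier at $\xi$ and invoke Proposition~\ref{pro:barrier} to obtain \eq{Fichera-si}; since the doubling-variable construction forces $\t u(\xi)\ge\phi(\xi)>0$, this yields the desired contradiction. The barrier splits into two subcases: when $Dd(\xi)A(\xi)Dd(\xi)>0$, I would take $w(x)=1-e^{-\mu d(x)}$ for $\mu$ large, whose leading term in $F[w]$ is $\mu^2 e^{-\mu d}\,Dd\cdot A Dd>0$; when $Dd(\xi)A(\xi)Dd(\xi)=0$ and $\Tr(A(\xi)D^2d(\xi))+b(\xi)\cdot Dd(\xi)<0$, I would take $w(x)=Kd(x)$ for $K$ large, noting that $F[Kd](\xi)=-K[\Tr(AD^2d)+b\cdot Dd](\xi)>0$. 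In both subcases, continuity yields $F[w]\ge 1$ in a small neighborhood of $\xi$ after suitable scaling.

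If $\xi$ lies in a Fichera-violated component $\Gamma$, I would establish \eq{Fichera-no} by extending $\phi$ to a supersolution in $\O\cup V$ for a small neighborhood $V$ of $\xi$. The structural data available on all of $\Gamma$ are $\Sigma(\eta)Dd(\eta)=0$ (equivalently $A(\eta)Dd(\eta)=0$) and $F[d](\eta)=-[\Tr(AD^2d)+b\cdot Dd](\eta)\leq 0$ for every $\eta\in\Gamma$; combined with the Lipschitz regularity of $\Sigma$ and $b$, these yield $|A(x)Dd(x)|=O(d(x))$, $Dd\cdot A Dd=O(d(x)^2)$, and $F[d](x)\leq O(d(x))$ in a tubular neighborhood of $\Gamma$. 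I would then extend $\phi$ to $\O\cup V$ as the lower semicontinuous function equal to $\phi$ on $\O$, to $\phi_*$ on $\po\cap V$, and to $+\infty$ outside $\ol\O\cap V$, and verify the supersolution property at each $\eta\in\po\cap V$ by a test-function argument: given $\varphi\in C^2$ touching $\phi_*$ from below at $\eta$, I would replace it by $\varphi^\dagger+\vep d$ where $\varphi^\dagger:=\varphi-|x-\eta|^4$, use the above decay estimates together with the quartic strict-minimum adjustment to show that the minimum of $\phi-\varphi^\dagger-\vep d$ on a small closed ball is attained at an interior point $x_\vep\in\O$ with $x_\vep\to\eta$ and $\phi(x_\vep)\to\varphi(\eta)$ as $\vep\to 0$, then apply the interior supersolution property of $\phi$ at $x_\vep$ and pass to the limit using $\vep F[d](x_\vep)\to 0$. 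With \eq{Fichera-no} in hand, the argument of Proposition~\ref{pro:CS} applies at $y_n$, now lying in the interior of the extended domain, and yields the contradiction.

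The main obstacle is the verification that the optimizer $x_\vep$ lies strictly inside $\O$, which is what converts the relaxed boundary information for $\phi$ into an actual interior application of the supersolution inequality. It is precisely here that having Fichera violated on the \emph{whole} component $\Gamma$ (not just at $\eta$) is used: the identity $A Dd=0$ and the decay estimates $A Dd=O(d)$ and $F[d]\leq O(d)$ must hold uniformly in a neighborhood of $\eta$ in $\po$, so that the perturbation $\vep d$ can drive the optimizer into $\O$ along a direction compatible with the supersolution inequality, and so that the linear contribution $\vep F[d]$ vanishes in the limit rather than obstructing it.
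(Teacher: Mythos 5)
Your overall strategy matches the paper's: reduce, via the discussion preceding Section~\ref{sec:barriers}, to showing \eq{Fichera-si} on the Fichera-satisfied components and \eq{Fichera-no} on the Fichera-violated ones. Your barrier construction for the Fichera-satisfied case (splitting into $w=1-e^{-\mu d}$ when $DdADd>0$, and $w=Kd$ when $DdADd=0$ with $\Tr(AD^2d)+b\cdot Dd<0$) is a valid, slightly different, alternative to the paper's single barrier $w=\log(\delta+d(x))-\log\delta$; Proposition~\ref{pro:barrier} then gives \eq{Fichera-si} as you say.

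There is, however, a genuine gap in your treatment of the Fichera-violated case, i.e.\ your substitute for Lemma~\ref{superat0}. You perturb the test function to $\varphi^\dagger+\vep d$ with $\varphi^\dagger=\varphi-|x-\eta|^4$ and claim the minimizer of $\phi-\varphi^\dagger-\vep d$ over a small closed ball lies in $\O$. A perturbation that is merely linear in $d$ does not force this. On $\partial\O\cap B$ the function $\phi_*-\varphi+|x-\eta|^4$ vanishes exactly at $\eta$, so an interior minimum requires an interior point $x$ with
\begin{equation*}
\phi(x)-\varphi(x)+|x-\eta|^4<\vep\, d(x),
\end{equation*}
that is, $(\phi-\varphi)(x)=o(d(x))$ along some sequence $x\to\eta$. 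Nothing in the hypotheses provides this. For example, if $(\phi-\varphi)(x)$ behaves like $\sqrt{d(x)}$ near $\eta$ (perfectly compatible with $\varphi$ touching $\phi_*$ from below at $\eta$), then for all $x$ near $\eta$ one has $\phi(x)-\varphi(x)+|x-\eta|^4\geq\sqrt{d(x)}>\vep\,d(x)$, so the minimum sits at $\eta\in\partial\O$ and the interior supersolution inequality for $\phi$ cannot be invoked. Your final paragraph acknowledges interiority of $x_\vep$ as ``the main obstacle'' but offers only the uniformity of the Fichera-violated structure and of the decay estimates on $A\,Dd$ and $F[d]$ to address it; those concern the operator $F$, not the behaviour of $\phi-\varphi$ near $\eta$, and they cannot rule out the scenario above. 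The paper's Lemma~\ref{superat0} resolves exactly this by using the \emph{singular} perturbation $\psi_\vep=\psi+\vep\log(d(x))$: since $\phi$ is bounded from below and $-\vep\log d\to+\infty$ as $d\to0^+$, the minimum of $\phi-\psi_\vep$ is necessarily attained at an interior point $x_\vep$. The Fichera-violated structure on $\Gamma$ together with the Lipschitz bounds on $\Sigma$ and $b$ is then used, not to obtain interiority, but to bound the extra terms $\vep\bigl(d^{-2}DdADd - d^{-1}\Tr(AD^2d)-d^{-1}b\cdot Dd\bigr)$ generated by the logarithm, so that one can let $\vep\to0^+$ in the inequality at $x_\vep$. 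You would need to replace your $\vep d$ with a perturbation that is coercive at $\partial\O$ to make the argument go through.
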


\begin{remark} The previous result applies to a  significant example, namely to the case that the domain $\Omega$ is invariant for the associated stochastic dynamics $dX_t = b(X_t)dt + \sqrt 2\, dW_t$ defined in  a standard probability space, being $W_t$ a Wiener process in $\R^N$. 
In fact, it is well known that $\Omega$ is invariant (and, at the same time, $\ol\Omega$ is invariant) if and only if the Fichera condition is  
violated everywhere on the boundary, see e.g. \cite{Fried}, \cite{Fried-Pinsky} and \cite{CDF} for a  complete discussion of this property even in non smooth domains. 
\end{remark}

Recall that the result would follow if we show that
\eq{Fichera-si} or \eq{Fichera-no} hold at every $\xi\in\partial\O$.
One can readily check that the Fichera condition implies that, for
$\delta>0$ small enough, the function 
$w(x):=\log(\delta+d(x))-\log\delta$ is a barrier at $\xi$ in the sense of
Definition \ref{def:barrier}. Thus, by Proposition \ref{pro:barrier},
\eq{Fichera-si} holds in the connected components where the Fichera condition
is fulfilled. Let us show that \eq{Fichera-no} holds in the others. Since the
Fichera condition does not involve the zero order term of the operator, we can
restrict to $\lambda=0$ in \eq{Fichera-no}.
Hence, the proof of \thm{l''=mulinear} relies on the following result,   which is essentially proved in \cite{Barles-Rouy}, Lemma 4.1. For the sake of clarity,  since there are minor differences in our setting, we provide  a simple proof below.

\begin{lemma}\label{superat0}
Assume that \eq{hyp:linear} holds, $\O$ is smooth and the Fichera condition 
is not
satisfied in an open subset $\Gamma$ of $\partial\O$ (in the induced topology),
that is,
$$\forall\xi\in\Gamma,\quad Dd(\xi)A(\xi)Dd(\xi)=0,\quad\Tr(A(\xi)D^2
d(\xi))+ b(\xi)\cdot Dd(\xi)\geq0.$$
Then, any supersolution
$\phi\in LSC(\O)$ of $F=0\text{ in }\O$, which is bounded from below, extended
to $\Gamma$ by setting
$$\forall\xi\in\Gamma,\quad\phi(\xi):=\liminf_{\su{x\to \xi}{x\in\O}}\phi(x),$$
is a supersolution in
$\O\cup\Gamma$.
\end{lemma}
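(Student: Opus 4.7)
The plan is to verify, for each $\xi\in\Gamma$, the viscosity supersolution inequality at $\xi$ for the LSC extension of $\phi$. Fix a test function $\varphi\in C^2$ such that $\phi-\varphi$ has a local minimum equal to $0$ at $\xi$ in some $B_r(\xi)\cap(\O\cup\Gamma)$; replacing $\varphi$ by $\varphi-\eta|x-\xi|^2$ and sending $\eta\to0^+$ at the end (using linearity of $F$), I may assume the quantitative strict bound $(\phi-\varphi)(x)\geq\eta|x-\xi|^2$ in that neighbourhood. The target is to show $F[\varphi](\xi)\geq0$.

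The strategy is a singular one-sided barrier penalization. Since $\O$ is smooth, $d\in C^2$ near $\partial\O$, so $\varphi_\epsilon(x):=\varphi(x)-\epsilon/d(x)$ is $C^2$ on $B_r(\xi)\cap\O$. Setting $\phi-\varphi_\epsilon:=+\infty$ on $B_r\cap\partial\O$, the function $\phi-\varphi_\epsilon$ is LSC on the compact set $\overline{B_r(\xi)}\cap\ol\O$; its infimum $m_\epsilon$ is attained at some $x_\epsilon\in B_r\cap\O$, the quadratic bound on $\partial B_r\cap\O$ ruling out minimizers there for small $\epsilon$. Moreover $m_\epsilon\to0$ as $\epsilon\to0^+$, by evaluating at any sequence $y_n\in\O$ with $y_n\to\xi$ and $\phi(y_n)\to\phi(\xi)$, and the quadratic bound forces $|x_\epsilon-\xi|\to0$ while the penalty forces $d(x_\epsilon)\gg\epsilon$. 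At this interior minimum the supersolution property of $\phi$ in $\O$ gives $F[\varphi_\epsilon](x_\epsilon)\geq0$, which, using $D(1/d)=-Dd/d^2$ and $D^2(1/d)=-D^2d/d^2+2\,Dd\otimes Dd/d^3$, rearranges by linearity of $F$ into the key inequality
$$F[\varphi](x_\epsilon)\geq\frac{\epsilon}{d^2}\bigl[\Tr(AD^2d)+b\cdot Dd\bigr](x_\epsilon)-\frac{2\epsilon}{d^3}\,(Dd\cdot ADd)(x_\epsilon)-\frac{\epsilon\,c(x_\epsilon)}{d(x_\epsilon)}.$$

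Passing to the limit is where the Fichera-failure hypothesis enters, in two complementary ways. Lipschitz regularity of the coefficients combined with $[\Tr(AD^2d)+b\cdot Dd](\xi)\geq0$ yields $[\Tr(AD^2d)+b\cdot Dd](x_\epsilon)\geq -O(|x_\epsilon-\xi|)$, so the first term on the right is bounded below by $-o(1)$. The factorisation $A=\Sigma^T\Sigma$ with $\Sigma$ Lipschitz combined with $(Dd\cdot ADd)(\xi)=|\Sigma(\xi)Dd(\xi)|^2=0$ upgrades the second vanishing to $(Dd\cdot ADd)(x_\epsilon)=|\Sigma(x_\epsilon)Dd(x_\epsilon)|^2=O(|x_\epsilon-\xi|^2)$, gaining a crucial extra order; the third term is $O(\epsilon/d(x_\epsilon))=o(1)$. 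Balancing the singular penalty against the strict quadratic lower bound through $\eta|x_\epsilon-\xi|^2+\epsilon/d(x_\epsilon)\leq m_\epsilon\to0$ controls the rates of $|x_\epsilon-\xi|$ and $d(x_\epsilon)$ so that each term is $o(1)$. Combined with continuity $F[\varphi](x_\epsilon)\to F[\varphi](\xi)$, this yields $F[\varphi](\xi)\geq0$, and sending $\eta\to0^+$ completes the proof.

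The main obstacle is precisely this final rate-balancing: the penalty creates three singular contributions of orders $\epsilon/d,\ \epsilon/d^2,\ \epsilon/d^3$, and closing the estimates requires simultaneously the Fichera-failure information (vanishing of $Dd\cdot ADd$ and nonnegativity of $\Tr(AD^2d)+b\cdot Dd$ at $\xi$), the $A=\Sigma^T\Sigma$ Lipschitz structure promoting the $O(|x-\xi|)$ vanishing of $Dd\cdot ADd$ to $O(|x-\xi|^2)$, and the quadratic strict-minimum trick, so that all three singular terms vanish in the limit.
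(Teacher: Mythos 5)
Your overall strategy — penalize with a singular barrier, drive the minimizer $x_\e$ into $\O$, apply the supersolution inequality there, and use the vanishing of $Dd\,A\,Dd$ and the sign of $\Tr(AD^2d)+b\cdot Dd$ to kill the singular terms — is the same as the paper's (which uses $\e\log d$ rather than $-\e/d$, but that difference is not essential). However, there is a genuine gap in your rate-balancing, coming from the fact that all your coefficient estimates are expressed in terms of $|x_\e-\xi|$ rather than $d(x_\e)$.

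Concretely, your bounds give $[\Tr(AD^2d)+b\cdot Dd](x_\e)\geq -C|x_\e-\xi|$ and $Dd\,A\,Dd(x_\e)\leq C|x_\e-\xi|^2$, so the two dangerous penalty terms are of size $\e|x_\e-\xi|/d^2(x_\e)$ and $\e|x_\e-\xi|^2/d^3(x_\e)$. Writing both as $(\e/d)\cdot(|x_\e-\xi|/d)$ and $(\e/d)\cdot(|x_\e-\xi|/d)^2$, your balancing information $\eta|x_\e-\xi|^2+\e/d(x_\e)\leq m_\e$ controls $\e/d\leq m_\e$, but gives no control whatsoever on the ratio $|x_\e-\xi|/d(x_\e)$: the minimizer $x_\e$ can approach $\xi$ nearly tangentially to $\partial\O$, making $d(x_\e)\ll|x_\e-\xi|$. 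Since $m_\e$ has no known rate relative to $\e$, the quantities $m_\e^{5/2}/\e$ and $m_\e^4/\e^2$ that your bounds produce need not tend to $0$, and the argument does not close.

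The missing idea, which is exactly what the paper uses, is to apply the Fichera and Lipschitz information at the projection $\xi_\e$ of $x_\e$ onto $\partial\O$, not at the fixed point $\xi$. For $\O$ smooth and $x_\e$ near $\partial\O$, the signed distance satisfies $Dd(x_\e)=-\nu(\xi_\e)$ exactly, and for $\e$ small one has $\xi_\e\in\Gamma$ (since $x_\e,\xi_\e\to\xi$). Then $\Sigma(\xi_\e)\nu(\xi_\e)=0$ and the Lipschitz bound on $\Sigma$ give
\[
Dd\,A\,Dd(x_\e)=\bigl\|\Sigma(x_\e)\nu(\xi_\e)\bigr\|^2=\bigl\|(\Sigma(x_\e)-\Sigma(\xi_\e))\nu(\xi_\e)\bigr\|^2\leq l^2\,d^2(x_\e),
\]
and similarly $[\Tr(AD^2d)+b\cdot Dd](x_\e)\geq -C\,d(x_\e)$, i.e., vanishing measured by $d(x_\e)=|x_\e-\xi_\e|$, not by $|x_\e-\xi|$. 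With these improved bounds your three penalty terms are each $O(\e/d(x_\e))\leq O(m_\e)=o(1)$ and the proof closes; the quadratic-strict-minimum refinement is then not even needed (a strict minimum suffices, as in the paper). So your proposal is repairable, but the identity $Dd(x_\e)=-\nu(\xi_\e)$ and the comparison to the foot $\xi_\e$ are essential and must be added.
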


\begin{proof} 
In this statement, we use the convention that $\phi$ automatically satisfies
the condition of being a supersolution at the points $\xi\in\Gamma$ where
$\phi(\xi)=+\infty$.
%
%
%
Let $\xi\in\Gamma$ and $\psi\in C^2(\O\cup\Gamma)$
be such that $(\phi-\psi)(\xi)=\min_{\ol\O\cap B}(\phi-\psi)=0$, for some
closed ball $B$ (with positive radius) centered at $\xi$ satisfying 
$B\cap\partial\O\subset\Gamma$.
Our aim is to show that $F[\psi](\xi)\geq0$.
By usual arguments, it is not restrictive to assume that the above minimum is
strict.
Consider the family of functions $(\psi_\e)_{\e>0}$ defined in $\O$ by
$\psi_\e(x):=\psi(x)+\e\log(d(x))$. Let $(x_\e)_{\e>0}$ in $\O\cap B$ be such
$(\phi-\psi_\e)(x_\e)=\min_{\O\cap B}(\phi-\psi_\e)$, and let
$\zeta\in\ol\O\cap B$ be the limit as $\e\to0^+$ of (a subsequence of) $x_\e$.
For $x\in\O\cap B$, we see that
\[\begin{split}
(\phi-\psi)(x)&=\lim_{\e\to0^+}(\phi-\psi_\e)(x)\geq
\liminf_{\e\to0^+}(\phi-\psi_\e)(x_\e)\geq(\phi-\psi)(\zeta)-
\limsup_{\e\to0^+}\e\log(d(x_\e))\\
&\geq(\phi-\psi)(\zeta).
\end{split}\]
Since this holds for any $x\in \O\cap B$, applying this inequality to a sequence of points along which 
$\phi$ tends to $\phi(\xi)$, we infer that $\zeta=\xi$, because $\phi-\psi$
has a strict minimum at $\xi$. This shows that $x_\vep\to\xi$ as $\vep\to
0^+$. 
In particular, since $x_\vep\notin\partial B$, 
we deduce,
being $\phi$ a supersolution in $\O$, that
$$
[-\Tr(AD^2\psi)-b\cdot D\psi-c\phi
-\e(d^{-1}\Tr(AD^2d)-d^{-2}DdADd+
d^{-1}b\cdot Dd)](x_\e)\geq0.
$$
This inequality  reads as
$$[F[\psi]+c(\psi-\phi)-\e(d^{-1}\Tr(AD^2d)-d^{-2}DdADd+
d^{-1}b\cdot Dd)](x_\e)\geq0.
$$
For $\e$ small enough, $x_\e$ has a unique projection $\xi_\e$ on $\partial\O$,
the function $d$ is smooth in a neighbourhood of $x_\e$ and satisfies
$Dd(x_\e)=(x_\e-\xi_\e)/|x_\e-\xi_\e|=:-\nu(\xi_\e)$.
Up to decreasing $\e$, we have that $\xi_\e\in\Gamma$ because $x_\e,\xi_\e\to\xi$
as $\e\to0$. It follows that $\Sigma(\xi_\e)\nu(\xi_\e)=0$ and thus
$DdADd(x_\e)=\|\Sigma(x_\e)\nu(\xi_\e)\|^2\geq-l^2d^2(x_\e)$,
where $l$ is the Lipschitz constant of $\Sigma$.
On the other hand, the Lipschitz continuity of $\Sigma$ and $b$ 
imply the existence of a constant $C$ such that
$$(\Tr(AD^2d)+ b\cdot Dd)(x_\e)\geq
(\Tr(AD^2d)+ b\cdot Dd)(\xi_\e)-Cd(x_\e).$$
Since $\xi_\e \in \Gamma$, we have that $(\Tr(AD^2d)+ b\cdot Dd)(\xi_\e)\geq0$ and then,
using the above inequalities, we obtain
\Fi{preps}
F[\psi](x_\vep)\geq-\e(C+l^2)+\sup_\O|c|(\phi-\psi)(x_\e).
\Ff
Since $x_\vep$ is a minimum point for $\phi-\psi_\e$, we have that
$$
\forall x\in\O\cap B,\quad
\phi(x_\vep)- [\psi(x_\vep)+\vep\log(d(x_\vep))]
\leq \phi(x)- [\psi(x)+\vep\log(d(x))].
$$
Notice that $\log(d(x_\vep))<0$ for $\e$ small enough, whence
$$
\forall x\in\O\cap B,\quad
\limsup\limits_{\vep \to0^+}(\phi-\psi)(x_\vep)\leq(\phi- \psi)(x).
$$
Choosing in place of  $x$ a sequence of points converging to $\xi$, along which $\phi$ tends
to $\phi(\xi)$,
we eventually infer that $(\phi-\psi)(x_\e)\to0$ as $\e\to0^+$.
Therefore, passing to the limit in \eq{preps} we deduce $F[\psi](\xi)\geq
0$, which concludes the proof.
\end{proof}


\begin{remark}
We do not know whether or not $\mu_1$ and $\ol\lambda_1$ do coincide when
$\partial\O$
has a connected component containing both points where the Fichera condition
is satisfied and points where it is not. 
The problem is that positive supersolutions in $\O$ may not be supersolutions
at the points $\xi$ that satisfy the Fichera condition but belong to the 
boundary of the set where the Fichera condition does not hold. In such case,
one could replace the perturbation $\e\log(d(x))$ used in the proof of Lemma
\ref{superat0} with $\e\log(|x-\xi|)$, and the perturbation terms could be controlled
if the sequences $\seq{x}$ converging to $\xi$ on which $\phi$ tends
to $\phi(\xi)$ satisfy $d(x_n,\partial \O) \gtrsim |x_n-\xi|$.
This is the so-called \emph{cone condition},  namely that the value of $\varphi$ at $\partial \Omega$ may be reached along at least one sequence of points lying in a  cone. The relevance of this condition for strong comparison results (i.e. comparison of viscosity solutions discontinuous at the boundary) was already pointed out before and specifically in connection with stochastic control problems, see \cite{Kat}, \cite{Barles-Rouy}. In particular, the conclusion of Lemma \ref{superat0} would still hold if the
cone condition is fulfilled at any point of the boundary (or at least at those
points where a  barrier does not exist).
\end{remark}

\end{document}